\begin{document} 

\newcommand*{\longerrightarrow}{\ensuremath{\relbar\joinrel\relbar\joinrel\relbar\joinrel\rightarrow}}

\pagestyle{plain}

\theoremstyle{plain}
\newtheorem{thm}{Theorem}[section]
\newtheorem{lem}[thm]{Lemma}
\newtheorem{prop}[thm]{Proposition}
\newtheorem{cor}[thm]{Corollary}

\theoremstyle{definition}
\newtheorem{defn}[thm]{Definition}
\newtheorem{conj}[thm]{Conjecture}
\newtheorem{eg}[thm]{Example}

\theoremstyle{remark}
\newtheorem{rem}[thm]{Remark}

\title{Morse Matchings on a Hypersimplex}
\author{Jacob T. Harper}
\address{Department of Mathematics\\
					University of Colorado\\
					Campus Box 395\\
					Boulder, CO 80309-0395\\
					USA				}
\email{Jacob.Harper@colorado.edu}					
\subjclass[2000]{52B11}

\begin{abstract}
We present a family of complete acyclic Morse matchings on the face lattice of a hypersimplex. Since a hypersimplex is a convex polytope, there is a natural way to form a CW complex from its faces. In a future paper we will utilize these matchings to classify every subcomplex whose reduced homology groups are concentrated in a single degree and describe a homology basis for each of them.
\end{abstract}

\maketitle

\doublespace

\section{Introduction}
The standard $n$-simplex is the polytope whose vertices are the $(0,1)$-vectors in $\mathbb{R}^{n+1}$ that sum to 1. A hypersimplex is a generalization of the $n$-simplex whose vertices are the same vectors that instead sum to $k$ where $1\leq k\leq n$. 
The subject of study in this paper is focused around this family of polytopes and more specifically, their face lattices.

In previous work \cite{Green09}, Green classified the faces of a different polytope, known as the half cube. They assemble naturally into a regular CW complex, $C_n$. This complex contained an interesting subcomplex $C_{n,k}$, obtained by deleting the interiors of all the half cube shaped faces of dimension $l\geq k$. It was also shown in \cite[Theorem 3.3.2]{Green09} that the reduced homology of $C_{n,k}$ is free over $\mathbb{Z}$ and concentrated in degree $k-1$ by utilizing a partial acyclic matching on the Hasse diagram of the face lattice of $C_{n,k}$.

The Coxeter group $W(D_n)$ acts naturally on the $(k-1)$-st homology of $C_{n,k}$. Using results from \cite{Green09}, Green then computed the character of this representation (over $\mathbb{C}$) in \cite[Theorem 4.4]{Green10}. Finally, a construction of a complete acyclic matching was given in \cite{GH1} and this led to a description of an explicit basis for the $(k-1)$-st homology of $C_{n,k}$.

In Section \ref{s:Construction}, we continue the work started in \cite{GH1}
by constructing a family of complete matchings on the Hasse diagram corresponding to the faces of a hypersimplex. 
The main result of this paper (Theorem \ref{thm:acyc}) then shows that these matchings are acyclic.
It should be noted that the existence of such a matching is neither surprising nor the point.
In a future paper we will show how the construction, and not the existence, allows us to give
a description of the characters of the representation that arises from the Coxeter group $W(A_n)$ acting naturally on the hypersimplex, classify subcomplexes whose reduced homology groups are concentrated in a single degree, and describe an explicit basis for these homology groups.
We also believe that finding such a matching on the face lattices of polytopes is an aesthetically pleasing goal in its own right.
The work here is based on the Ph.D. thesis of the author \cite{HarperPHD}, which was directed by R.M. Green.

\section{The Hypersimplex}

In this section we introduce some notation and explore some of the properties of a hypersimplex that will be needed in the subsequent sections.

\begin{defn}
A subset $K\subset\mathbb{R}^n$ is said to be \textbf{convex} if the straight line segment between any two points in $K$ is also contained in $K$.
For any $K\subset\mathbb{R}^n$, the smallest convex set containing $K$ can be constructed as the intersection of all convex sets that contain $K$ and is known as the \textbf{convex hull} of $K$. A \textbf{polytope} is the convex hull of a finite set of points in $\mathbb{R}^n$.
\end{defn}

Note that if $H$ is a hyperplane in $\mathbb{R}^n$, then the complement $\mathbb{R}^n-H$ has two open components. A closed halfspace is the union of one of those two components with the hyperplane. A polytope can also be defined as the bounded intersection of finitely many closed halfspaces. It is nontrivial that these two definitions for a polytope are equivalent, but it is shown in the following theorem to be true.

\begin{thm}[\textbf{The Main Theorem for Polytopes}]
A subset $P\subset\mathbb{R}^n$ is the convex hull of a finite subset if and only if it is a bounded intersection of halfspaces.
\end{thm}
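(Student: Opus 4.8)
The plan is to prove the two implications separately, in each case reducing the problem to the behaviour of a finite system of linear inequalities under the projection that deletes a coordinate. The technical engine is Fourier--Motzkin elimination: the projection of the solution set of finitely many linear inequalities is again the solution set of finitely many linear inequalities.

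For the direction ``convex hull $\Rightarrow$ intersection of halfspaces'', let $P=\mathrm{conv}\{v_1,\dots,v_m\}\subset\mathbb{R}^n$. A point $x$ lies in $P$ exactly when there exist $\lambda_1,\dots,\lambda_m\ge 0$ with $\sum_j\lambda_j=1$ and $x=\sum_j\lambda_j v_j$. The set of pairs $(x,\lambda)\in\mathbb{R}^n\times\mathbb{R}^m$ satisfying these conditions is cut out by finitely many linear inequalities and equalities (each equality being a pair of opposite inequalities), hence is an intersection of finitely many halfspaces in $\mathbb{R}^{n+m}$, and $P$ is its image under the coordinate projection $(x,\lambda)\mapsto x$. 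Applying Fourier--Motzkin elimination $m$ times to remove $\lambda_1,\dots,\lambda_m$ one at a time exhibits $P$ as the solution set of finitely many linear inequalities in $x$, i.e.\ as an intersection of finitely many halfspaces. Boundedness is automatic, since every coordinate of a point of $P$ lies between the smallest and largest values of that coordinate among $v_1,\dots,v_m$.

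For the converse, let $P=\{x:a_i\cdot x\le b_i,\ 1\le i\le N\}$ be bounded and nonempty. Call $v\in P$ a \emph{vertex} if the inequalities that are tight at $v$ already determine $v$ uniquely as their common solution; since a vertex is pinned down by a subset of the finitely many constraints $a_i\cdot x=b_i$, the set $V$ of vertices of $P$ is finite, and $\mathrm{conv}(V)\subseteq P$ because $P$ is convex and contains $V$. It remains to prove $P\subseteq\mathrm{conv}(V)$, which I would do by induction on $d=\dim P$. If $d=0$ then $P$ is a single point, its own unique vertex. If $d\ge 1$, fix $x\in P$. If $x$ lies on the relative boundary of $P$, then the constraints tight at $x$ beyond those holding with equality on all of $P$ define a proper nonempty face of $P$, which is itself a bounded intersection of halfspaces of strictly smaller dimension, all of whose vertices are vertices of $P$; by induction $x\in\mathrm{conv}(V)$. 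If instead $x$ lies in the relative interior of $P$, choose any nonzero direction $u$ parallel to the affine hull of $P$; boundedness forces the line $\{x+tu\}$ to meet $P$ in a segment $[y,z]$ with $y\ne x\ne z$, where $y$ and $z$ lie on the relative boundary of $P$ and hence in $\mathrm{conv}(V)$ by the previous case, so $x$, a convex combination of $y$ and $z$, lies in $\mathrm{conv}(V)$ as well.

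The main obstacle is the Fourier--Motzkin elimination lemma underlying the first implication: one must verify that, after pairing each inequality in which the eliminated variable has positive coefficient with each in which it has negative coefficient (and retaining the inequalities not involving that variable), the resulting finite system describes \emph{exactly} the projection rather than merely a superset of it --- which amounts to showing that, given a point satisfying all the derived inequalities, one can actually produce a value of the eliminated variable compatible with every original inequality. In the second implication the only delicate points are bookkeeping: that a face of a face of $P$ is a face of $P$, that a proper face has strictly smaller dimension, and that every vertex of a face of $P$ is a vertex of $P$; all follow routinely from the description of faces via tight inequalities.
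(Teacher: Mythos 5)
Your argument is correct, but note that the paper does not actually prove this statement: it simply cites \cite[Theorem 1.1]{Ziegler95}, so the comparison is really with Ziegler's proof. For the direction ``convex hull $\Rightarrow$ bounded $\mathcal{H}$-polyhedron'' you follow exactly the standard route (and Ziegler's): realize $P$ as the coordinate projection of the polyhedron $\{(x,\lambda):\lambda\ge 0,\ \sum\lambda_j=1,\ x=\sum\lambda_j v_j\}$ and eliminate the $\lambda_j$ by Fourier--Motzkin; you correctly isolate the one nontrivial point, namely that the eliminated system describes the projection exactly and not just a superset. For the converse, however, you diverge from Ziegler, who homogenizes to cones and runs Fourier--Motzkin again through a polarity/duality argument; you instead give the Minkowski-style induction on $\dim P$: finitely many vertices, relative-boundary points handled by passing to a proper face, relative-interior points written as convex combinations of two relative-boundary points on a line. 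Both are valid. Ziegler's route has the advantage of treating the two directions symmetrically and yielding the cone version of the theorem for free; yours is more elementary and self-contained, at the cost of the face bookkeeping you flag at the end (face of a face is a face, vertices of faces are vertices of $P$, proper faces have strictly smaller dimension), each of which does follow routinely from the tight-inequality description, plus the small unstated lemma that a point of $P$ at which the tight constraints do not determine the point uniquely lies in the relative interior of a segment contained in $P$ (needed both for the base case and to identify the relative boundary with the points having an extra tight constraint).
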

\begin{proof}
This is \cite[Theorem 1.1]{Ziegler95}. 
\end{proof}

Both definitions are helpful as some properties of polytopes are easier to understand when viewing them one way versus the other. For example, the following definition relates more to the halfspace definition of a polytope and can be found in Ziegler's book \cite{Ziegler95}.

\begin{defn}
Let $P\subseteq \mathbb{R}^n$ be a polytope. A \textbf{face} $F$ of $P$ is either $P$ itself or the intersection of $P$ with a hyperplane such that $P$ is contained in one of the two closed halfspaces determined by the hyperplane. The faces of dimension 0, 1, and $i$ are called the \textbf{vertices}, \textbf{edges}, and \textbf{$i$-faces} of $P$ respectively.
\end{defn}

In this paper we will investigate the properties of the following polytope.

\begin{defn}
Let $J(n,k)$ be the polytope equal to the convex hull of the points in $\mathbb{R}^n$ with exactly $k$ 1's and $n-k$ 0's. For any value of $n$ and $k$ such that $1\leq k\leq n-1$, this polytope is called a \textbf{hypersimplex}.
\end{defn}

It turns out that every face of a polytope is equal to the convex hull of a subset of the vertices. We will use the following definition to fully describe the face lattice of the hypersimplex.

\begin{defn}
Let $S$ be a sequence of length $n$ made up of 0's, 1's, and~*'s. Define $F(S)$ to be the face of $J(n,k)$ equal to the convex hull of the set of vertices of this polytope whose coordinates written out in a sequence differ from $S$ only where $S$ has a *. Define $S(0)$ and $S(1)$ to be the number of 0's and 1's in the sequence $S$ respectively.
\end{defn}

\begin{eg}
Let $S$ = 11***00 with $n = 7$ and $k = 3$. Then $F(S)$ is equal to the convex hull of $(1,1,1,0,0,0,0)$, $(1,1,0,1,0,0,0)$, and $(1,1,0,0,1,0,0)$.
\end{eg}

\begin{prop}
\label{prop:faces}
The faces of $J(n,k)$ are as follows:
\begin{enumerate}
\item [\rm(i)] $\binom{n}{k}$ 0-faces (vertices) given by $\{(x_1, x_2,\ldots,x_n)\in\mathbb{R}^n \ | \ x_j\in\{0,1\}$ and $\displaystyle\sum_{j=1}^{n} x_j=k\}$;
\item [\rm(ii)] $\binom{n}{i+1}$$\binom{n-i-1}{j-1}$ i-faces $F(S)$ for each $j$ such that $1\leq j\leq k\leq j+i-1\leq n-1$, $S(0)=n-j-i$, and $S(1)=j-1$.
\end{enumerate}
\end{prop}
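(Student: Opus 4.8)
The plan is to exhibit a bijection between the nonempty faces of $J(n,k)$ and the sequences $S$ described in the statement, and then to count those sequences. Throughout I will use the standard fact (see \cite{Ziegler95}) that every face of a polytope is the convex hull of the vertices it contains, so a face of $J(n,k)$ is determined by, and may be identified with, its set of vertices.

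First I would verify that each $F(S)$ is a face and compute its dimension. Let $a$ be the number of $*$'s in $S$ and put $m = k - S(1)$; assume $0 \le m \le a$, as otherwise no vertex of $J(n,k)$ agrees with $S$ off the $*$'s and $F(S)$ is empty. Define a linear functional $c$ by $c_i = 2,1,0$ according as $S_i$ is $1$, $*$, or $0$. A one-line greedy argument shows that among vertices $v$ of $J(n,k)$ the quantity $c\cdot v$ is maximized exactly by the vertices having a $1$ in each $1$-position of $S$, a $0$ in each $0$-position, and $m$ ones among the $*$-positions, i.e.\ exactly by the vertices spanning $F(S)$, so $F(S)$ is a face. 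Looking only at the $*$-coordinates, these vertices are the $0/1$-vectors of $\mathbb{R}^a$ with $m$ ones; their affine hull is the hyperplane $\sum x_i = m$ when $1 \le m \le a-1$ and a single point when $m=0$ or $m=a$. Hence $\dim F(S) = a-1$ in the former case and $0$ in the latter, and in the latter case every $*$ of $S$ is actually forced, so $F(S)$ coincides with $F(S')$ for the shorter-starred $S'$ obtained by filling those $*$'s. Consequently every face arising as some $F(S)$ arises from an $S$ that has either no $*$'s or satisfies $1 \le m \le a-1$.

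The substantive step is the reverse inclusion: every nonempty face $F$ of $J(n,k)$ equals $F(S)$ for some such $S$. Write $F = \{x \in J(n,k) : c\cdot x = c_0\}$ for a supporting hyperplane, and partition the index set into the level sets $B_1,\dots,B_r$ of the coordinates of $c$, ordered by decreasing value, with $b_t = |B_t|$. Maximizing $c\cdot v$ over vertices is greedy: fill $B_1, B_2,\dots$ with $1$'s until $k$ of them are used. If $k = b_1 + \cdots + b_s$ for some $s$, the maximizer is the unique vertex with $1$'s on $B_1 \cup \cdots \cup B_s$ and $0$'s elsewhere, and $F$ is that vertex $= F(S)$ for the all-$0/1$ sequence $S$ with those $1$'s; otherwise $b_1 + \cdots + b_{s-1} < k < b_1 + \cdots + b_s$, the maximizers are exactly the vertices with $1$'s on $B_1 \cup \cdots \cup B_{s-1}$, $0$'s on $B_{s+1} \cup \cdots \cup B_r$, and $q = k - (b_1 + \cdots + b_{s-1})$ ones on $B_s$ (where $1 \le q \le b_s - 1$), so $F = F(S)$ for the $S$ that is $1$ on $B_1 \cup \cdots \cup B_{s-1}$, $*$ on $B_s$, and $0$ elsewhere; the case $r=1$ recovers $F = J(n,k)$, which is $F(S)$ for $S$ the all-$*$ sequence of length $n$. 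I expect this greedy analysis to carry essentially all the real content; the rest is bookkeeping.

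Finally I would read off the count. The map $S \mapsto F(S)$, with $S$ ranging over sequences with no $*$'s or with $1 \le m \le a-1$, lands in the set of nonempty faces by the second paragraph, is onto by the third, and is injective because the $*$-positions of $S$ are exactly the coordinates on which the vertices of $F(S)$ fail to be constant (this uses $1 \le m \le a-1$), while the constant values recover where $S$ is $0$ or $1$. For $0$-faces, $S$ has no $*$ and $S(1) = k$, giving $\binom{n}{k}$ vertices, which is (i). A face of dimension $i \ge 1$ comes from an $S$ with exactly $i+1$ stars; setting $S(1) = j-1$, the inequalities $m = k-(j-1) \ge 1$, $m \le (i+1)-1 = i$, and $S(0) = n-(j-1)-(i+1) \ge 0$ are exactly $1 \le j \le k \le j+i-1 \le n-1$, and for each admissible $(i,j)$ the number of such $S$ is $\binom{n}{i+1}$ (choices of star positions) times $\binom{n-i-1}{j-1}$ (choices of $1$-positions among the remaining $n-i-1$), giving (ii).
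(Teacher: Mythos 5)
Your argument is correct, and it is worth noting that the paper itself gives no proof of this proposition: it simply cites Proposition 1.3.10 of the author's thesis and remarks that the statement is likely well-known. So there is no in-paper route to compare against; what you have written is a self-contained and standard argument of exactly the kind the citation points to. Your two key steps are sound: the weight-$2,1,0$ functional exhibits each admissible $F(S)$ as the maximal face of a linear functional (the exchange/greedy computation $2p+q = p+k-r \le S(1)+k$ with equality iff $p=S(1)$, $r=0$ is the whole content), and the level-set decomposition of an arbitrary supporting functional shows conversely that every nonempty face is of this form, with the dimension count coming from the affine hull $\sum x_i = m$ on the star coordinates. Two small points you gloss over but should acknowledge: you implicitly normalize so that the supporting hyperplane cuts the \emph{maximum} face (replace $c$ by $-c$ otherwise, and note $c_0$ must then be the maximum value for the face to be nonempty), and you use the points defining $J(n,k)$ as ``vertices'' before having shown they are 0-faces --- harmless, since your own greedy argument with a star-free $S$ establishes precisely that each defining point is an exposed 0-face, but the logical order should be made explicit, together with the standard fact (Ziegler) that a face of $\mathrm{conv}(V)$ is the convex hull of the points of $V$ it contains. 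With those remarks inserted, the bookkeeping in your final paragraph matches the stated count $\binom{n}{i+1}\binom{n-i-1}{j-1}$ and the constraints $1\le j\le k\le j+i-1\le n-1$ exactly.
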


\begin{proof}
One proof of this can be found in \cite[Proposition 1.3.10]{HarperPHD} and is likely to be well-known.
\end{proof}


\section{CW complexes and Cellular Homology}
\label{s:CW}

In this section we show how to naturally construct a CW complex from a hypersimplex in a way that generalizes to any convex polytope. 
We start by recalling several definitions.

\begin{defn}
\
\begin{enumerate}
\item [(i)] An \textbf{$n$-cell}, $e = e^n$ is a homeomorphic copy of the open $n$-disk $D^n - S^{n-1}$, where $D^n$ is the closed unit ball in Euclidean $n$-space and $S^{n-1}$ is its boundary, the unit $(n-1)$-sphere. We call $e$ a \textbf{cell} if it is an $n$-cell for some $n$ and define dim$(e)=n$.
\item [(ii)] If a topological space $X$ is a disjoint union of cells $X=\bigcup\{e \ | \ e\in E\}$, then for each $k\geq 0 $, we define the \textbf{$k$-skeleton} $X^{(k)}$ of $X$ by $X^{(k)}=\bigcup\{e\in E \ | \ $dim$(e)\leq k \}$.
\end{enumerate}
\end{defn}


\begin{defn}
A \textbf{finite CW complex} is an ordered triple $(X,E,\Phi)$, where $X$ is a Hausdorff space, $E$ is a family of cells in $X$, and $\{\Phi_e \ | \ e\in E\}$ is a family of maps, such that
\begin{enumerate}
\item [(i)] $X=\bigcup\{e \ | \ e\in E\}$ is a disjoint union;
\item [(ii)] for each $k$-cell $e\in E$, the map $\Phi_e : D^k \longrightarrow e\cup X^{(k-1)}$ is a continuous map such that $\Phi_e(S^{k-1})\subseteq X^{(k-1)}$ and $\Phi_e|_{D^k-S^{k-1}}:D^k-S^{k-1} \longrightarrow e$ is a homeomorphism.
\end{enumerate}
If the maps $\Phi_e$ are all homeomorphisms, the CW complex is called \textbf{regular}. In this paper, we will only consider CW complexes that are regular.
\end{defn}

\begin{defn}
\label{def:subcomplex}
A \textbf{subcomplex} of the CW complex $(X,E,\Phi)$ is a triple $(|E'|,E',\Phi')$,  where $E'\subset E$, 
$|E'|:=\bigcup\{e\ | \ e\in E'\}\subset X,$
$\Phi'=\{\Phi_e \ | \ e\in E'\}$, and Im $\Phi_e \subset |E'|$ for every $e\in E'$. 
\end{defn}

\begin{prop}
Let $K$ be the hypersimplex $J(n,k)$ regarded as a subspace of $\mathbb{R}^n$, and let $E$ be the union of the following two sets:
\begin{enumerate}
\item [\rm(i)] the set of vertices of $J(n,k)$;
\item [\rm(ii)] the set of interiors of all $i$-faces of $J(n,k)$ for all $0<i<n$.
\end{enumerate}
Then $(K,E,\Phi)$ is a regular CW complex, where the maps $\Phi_e$ are the natural identifications.
\end{prop}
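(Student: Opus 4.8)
The plan is to verify the three defining conditions of a regular CW complex directly, using standard facts about the geometry of convex polytopes. First I would observe that $K = J(n,k)$, as a convex polytope in $\mathbb{R}^n$, is a compact (hence Hausdorff) subspace, so the ambient Hausdorff condition is immediate. The real content is the decomposition $K = \bigcup\{e \mid e\in E\}$ into disjoint cells: here I would invoke the standard fact (see Ziegler \cite{Ziegler95}) that a convex polytope is the disjoint union of the relative interiors of its faces, where the vertices are their own relative interiors and each $i$-face with $i>0$ is a compact convex set of dimension $i$, so its relative interior is homeomorphic to the open disk $D^i - S^{i-1}$. Thus each element of $E$ genuinely is a cell in the sense of the earlier definition, and disjointness is exactly the statement that distinct faces have disjoint relative interiors.

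Next I would check the skeletal/attaching condition. For each $i$-face $F$, let $e$ be its relative interior; the natural map $\Phi_e$ is the inclusion of the closed face $\overline{F}$ (identified with $D^i$ via any homeomorphism of the pair $(\overline{F}, \partial F)$ with $(D^i, S^{i-1})$, which exists since $\overline{F}$ is a compact convex body of dimension $i$) into $K$. I would then use the fact that the boundary $\partial F = \overline{F} - e$ of a face is itself the union of the proper faces of $F$, each of which is again a face of $J(n,k)$ of strictly smaller dimension; hence $\Phi_e(S^{i-1}) = \partial F \subseteq X^{(i-1)}$, and $\Phi_e$ restricted to the open disk is a homeomorphism onto $e$ by construction. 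Since the $\Phi_e$ so chosen are homeomorphisms onto the closed faces, the complex is moreover regular, as claimed.

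The one point requiring slightly more care—and what I expect to be the main obstacle—is the claim that $\overline F - e = \partial F$ coincides with the union of the proper faces of $F$ and that each proper face of $F$ is a face of $J(n,k)$. For this I would use two standard polytope facts: (a) the faces of a face $F$ of a polytope $P$ are exactly the faces of $P$ contained in $F$ (transitivity of the face relation), and (b) the boundary of a polytope relative to its affine hull is the union of its facets, and inductively of all its proper faces. In the present setting this can be made completely explicit using the combinatorial description in Proposition~\ref{prop:faces} and the $\{0,1,*\}$-sequence encoding: the proper faces of $F(S)$ are precisely the $F(S')$ obtained by specializing one or more of the $*$'s in $S$ to a $0$ or a $1$ (subject to the constraints keeping the coordinate sum equal to $k$), so the combinatorics of the boundary is transparent. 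Finally I would note that $K$ being compact means $E$ is finite, so no further point-set topology (weak topology / closure-finiteness) is needed beyond what the finite CW definition already packages, completing the verification.
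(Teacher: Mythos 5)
Your verification is correct and is essentially the same argument the paper relies on: the paper's proof is a one-line appeal to the standard fact that the faces of a convex polytope form a regular CW complex (with details deferred to Forman's survey, \S 1.3), and your write-up simply supplies those standard details --- disjointness of the relative interiors of faces, closed faces being homeomorphic to balls with relative boundary going to the sphere, the boundary of a face being the union of its proper faces, and transitivity of the face relation. The only inessential imprecision is the parenthetical claim that the proper faces of $F(S)$ are ``precisely'' the $F(S')$ obtained by specializing $*$'s: that parametrization is not injective and has degenerate cases (for instance when the constraint forces all remaining $*$'s to a single value), but nothing in your argument actually depends on this remark.
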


\begin{proof}
It is a standard result that the faces of a convex polytope form a regular CW complex; more details can be found in \cite[\S 1.3]{Forman04}.
\end{proof}

Later on it will be necessary to compute the homology groups that arise from subcomplexes of the CW complex corresponding to $J(n,k)$. 
Cellular homology is a convenient theory for doing exactly that.
We do not recall the full definition here, but instead direct the reader to \cite[\S 2.2]{Hatcher} and \cite[Proposition 5.3.10]{Geoghegan} for more details.

\begin{defn}
A \textbf{chain complex} is a sequence of abelian groups or modules $(C_i)_{i\in \mathbb{Z}}$, connected by homomorphisms (called boundary operators) $\partial_i : C_i\rightarrow C_{i-1}$, such that the composition of any two consecutive maps is zero: $\partial_i \circ \partial_{i+1} = 0$ for all $i$.
\end{defn}

Let $X$ be a regular CW complex and consider the chain complex $(C_i)_{i\in \mathbb{Z}}$, called the cellular chain complex of $X$, having the following properties.
The groups $C_i$ are all free abelian and have basis in one-to-one correspondence with the $i$-cells of $X$. The maps $\partial_i$ have the form 
$$\partial_i(e_\tau) = \displaystyle\sum_{\sigma\in X^{(i-1)}}[\tau:\sigma]e_\sigma$$ 
where $[\tau:\sigma] = 0$ if $\sigma$ is not a face of $\tau$ and $[\tau:\sigma] = \pm 1$ if $\sigma$ is a face of $\tau$, dependent on an orientation of the faces. The homology groups $H_i(X) =$ ker$(\partial_i) / $Im$(\partial_{i+1})$ are called the cellular homology groups and are equivalent to the homology groups obtained using singular homology. 
It should also be noted that we can extend $X$ by considering $\emptyset$ as a unique cell of dimension $-1$. The resulting cellular chain complex then leads to the reduced homology groups which are denoted by $\widetilde{H}_i(X)$.


\section{Discrete Morse Theory}
\label{s:Morse}

Even though cellular homology simplifies finding the homology groups by some amount, it would make things much easier if our CW complex always had the property that no two of its cells were in adjacent dimensions. 
In general this does not happen and we can not just modify a CW complex and expect it to produce the same homology groups. However the techniques introduced in this section, which were invented by Forman \cite{Forman02}, will give us a CW complex that is homotopic to ours and that have this desired property in most of the cases later on.

\begin{defn}
Let $K$ be a finite regular CW complex. A \textbf{discrete vector field} on $K$ is a collection of pairs of cells $(K_1, K_2)$ such that
\begin{enumerate}
\item [(i)] $K_1$ is a face of $K_2$ of codimension 1 and
\item [(ii)] every cell of $K$ lies in at most one such pair.
\end{enumerate}
We call a cell of $K$ \textbf{matched} if it lies in one of the above pairs and \textbf{unmatched} otherwise.
\end{defn}

\begin{defn}
If $V$ is a discrete vector field on a regular CW complex $K$, a \textbf{$V$-path} is a sequence of cells 
\begin{center}
$a_0, b_0, a_1, b_1, a_2,\ldots,b_r,a_{r+1}$
\end{center}
such that for each $i=0,\ldots, r$, $a_i$ and $a_{i+1}$ are each a codimension 1 face of $b_i$, each of the pairs $(a_i, b_i)$ belongs to $V$ (hence $a_i$ is matched with $b_i$), and $a_i \neq a_{i+1}$ for all $0\leq i \leq r$. If $r\geq 0$, we call the $V$-path \textbf{nontrivial} and if $a_0 = a_{r+1}$, we call the $V$-path \textbf{closed}.
\end{defn}

Note that all of the faces $a_i$ in the sequence above have the same dimension, $p$ say, and all of the faces $b_i$ have dimension $p+1$.

\begin{rem}
\label{rem:hasse}
Let $V$ be a discrete vector field on a regular CW complex $K$. Consider the set of cells of $K$ together with the empty cell $\emptyset$, which we consider as a cell of dimension $-1$. This gives us a partially ordered set ordered under inclusion. We can create a directed Hasse diagram, $H(V)$, from this by pointing all of the edges towards the larger cell and then reversing the direction of any edge in which the smaller cell is matched with the larger cell.
\end{rem}

\begin{eg}
\label{eg:Hasse0}
Consider the hypersimplex $J(3,1)$. This is just a $2$-dimensional simplex equal to the convex hull of the points $(1,0,0), (0,1,0),$ and $(0,0,1)$ in $\mathbb{R}^3$. Label the vertices by $A, B,$ and $C$, label the edges as $AB, AC,$ and $AB$ where the edge $AB$ is the edge joining $A$ and $B$ and likewise for the other two, and label the unique 2-face of the simplex by $ABC$ as shown on the left of Figure \ref{fig:Hasse0} below.

Let $V_1$ be the discrete vector field made up of the following pairs of faces:
\begin{center}
$(A, AB), (B, BC),$ and $(C, AC).$
\end{center}
The corresponding directed Hasse diagram $H(V_1)$ can also be seen in Figure \ref{fig:Hasse0}. Note that this is only a partial matching since the faces $ABC$ and $\emptyset$ are unmatched. The $V_1$-path $A, AB, B$ is a nontrivial $V_1$-path, but is not closed while the $V_1$-path $A, AB, B, BC, C, AC, A$ is a nontrivial closed $V_1$-path.
\end{eg}

\begin{figure}[htbp]
\begin{center}
\begin{tikzpicture}[scale=1]

\def \tscale{3}

\small

\node (A) at (0,0) [circle, draw, inner sep=1pt] {};
\node (B) at (\tscale* .5, \tscale * 3^.5/2) [circle, draw, inner sep=1pt] {};
\node (C) at (\tscale,0) [circle, draw, inner sep=1pt] {};
\node at (A) [left] {A};
\node at (B) [above] {B};
\node at (C) [right] {C};

\node (D) at (\tscale* .5, \tscale * 3^.5/4 * .9) [below] {ABC};

\fill [black] (A) circle (1.2pt);
\fill [black] (B) circle (1.2pt);
\fill [black] (C) circle (1.2pt);

\draw (A) -- node [left] {AB} (B);
\draw (B) -- node [right] {BC} (C);
\draw (A) -- node [below] {AC} (C);

\normalsize

\node at (\tscale/2,-1) {$J(3,1)$};

\end{tikzpicture}
\hspace{1 in}
\begin{tikzpicture}[scale=1, >=stealth, thick, shorten >=2pt, shorten <=2pt]
\tikzset{shade t/.style ={gray!90}}
\tikzset{nodecirc/.style ={circle, draw, inner sep=1pt, fill=black}}
\node (e) at (0,0) [nodecirc] {};
\node (A) at (-1,1) [nodecirc] {};
\node (B) at (0,1) [nodecirc] {};
\node (C) at (1,1) [nodecirc] {};
\node (AB) at (-1,2) [nodecirc] {};
\node (AC) at (0,2) [nodecirc] {};
\node (BC) at (1,2) [nodecirc] {};
\node (ABC) at (0,3) [nodecirc] {};

\footnotesize
\node at (e) [below] {$\emptyset$};
\node at (A) [left] {A};
\node at (0,.95) [right] {B};
\node at (C) [right] {C};
\node at (AB) [left] {AB};
\node at (0,2.05) [right] {AC};
\node at (BC) [right] {BC};
\node at(ABC) [above] {ABC};
\normalsize

\draw [->] [shade t] (e) -- (A);
\draw [->] [shade t] (e) -- (B);
\draw [->] [shade t] (e) -- (C);
\draw [->] [shade t] (A) -- (AC);
\draw [<-] [shade t] (A) -- (AB);
\draw [->] [shade t] (B) -- (AB);
\draw [<-] [shade t] (B) -- (BC);
\draw [<-] [shade t] (C) -- (AC);
\draw [->] [shade t] (C) -- (BC);
\draw [->] [shade t] (AC) -- (ABC);
\draw [->] [shade t] (AB) -- (ABC);
\draw [->] [shade t] (BC) -- (ABC);

\node at (0,-1) {$H(V_1)$};

\end{tikzpicture}
\caption{Example of a Hasse diagram}
\label{fig:Hasse0}
\end{center}
\end{figure}
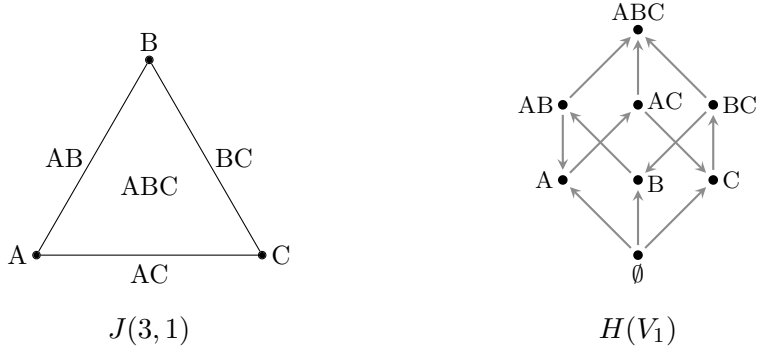

\begin{defn}
Consider the directed Hasse diagram $H(V)$ described in Remark \ref{rem:hasse}. If $H(V)$ has no directed cycles then we say that $V$ is an \textbf{acyclic matching} of the Hasse diagram of $K$. We call such a matching a \textbf{partial} matching if not every cell is paired with another and say it is a \textbf{complete} matching otherwise.
\end{defn}

\begin{thm}[\textbf{Forman}]
\label{thm:Forman}
Let $V$ be a discrete vector field on a regular CW complex $K$.
\begin{enumerate}
\item [\rm(i)] There are no nontrivial closed $V$-paths if and only if $V$ is an acyclic matching of the Hasse diagram of $K$.
\item [\rm(ii)] Suppose that $V$ is an acyclic partial matching of the Hasse diagram of $K$ in which the empty set is unpaired. Let $u_p$ denote the number of unpaired $p$-cells. Then $K$ is homotopic to a CW complex with exactly $u_p$ cells of dimension $p$ for each $p\geq 0$.
\end{enumerate}
\end{thm}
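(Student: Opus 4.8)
For part (i) the plan is to set up an exact dictionary between directed closed walks in $H(V)$ and nontrivial closed $V$-paths, so that the two non-existence conditions become literally equivalent. Edges of $H(V)$ come in two kinds: \emph{ascending} edges $\sigma\to\tau$ with $\dim\tau=\dim\sigma+1$ and $(\sigma,\tau)\notin V$, and \emph{descending} edges $\tau\to\sigma$ with $(\sigma,\tau)\in V$. The crucial local fact is that once a directed walk traverses a descending edge and arrives at a cell $\sigma$, that cell is already matched upward, so its only outgoing edges are ascending; hence no directed walk has two consecutive descending edges. In a directed closed walk the numbers of ascending and descending steps agree (the dimension returns to its start), so a closed walk with $m$ descending edges has its $m$ ascending edges distributed among the $m$ gaps between consecutive descending edges, forcing exactly one per gap; thus every directed closed walk strictly alternates $\sigma_0\to\tau_0\to\sigma_1\to\cdots\to\tau_{m-1}\to\sigma_0$. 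Reversing this walk and setting $b_i:=\tau_i$ and $a_i:=\sigma_{i+1}$, one checks straight from the definitions that $a_0,b_0,a_1,\dots$ is a closed $V$-path, with $a_i\ne a_{i+1}$ because one of the incidences $\sigma_i\subset\tau_i$, $\sigma_{i+1}\subset\tau_i$ is matched and the other is not. Conversely, reversing any nontrivial closed $V$-path yields a directed closed walk in $H(V)$, and a directed closed walk contains a directed cycle. So $H(V)$ has a directed cycle if and only if a nontrivial closed $V$-path exists, which is (i).

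For part (ii) I would imitate the proof of the corresponding statement in smooth Morse theory. First, promote the acyclic matching to a discrete Morse function: since $H(V)$ has no directed cycles it has a linear extension, and from it one builds a real-valued function $f$ on the cells of $K$ that increases with dimension except across each matched pair $(\sigma,\tau)$, where $f(\sigma)\ge f(\tau)$. For $c\in\mathbb{R}$ let $K(c)$ be the subcomplex consisting of all cells on which $f\le c$ together with all their faces, so $K(-\infty)=\emptyset$ and $K(+\infty)=K$, and track how $K(c)$ changes as $c$ increases past the finitely many values of $f$. There are two cases. If $c$ passes a value attained only on a matched pair $(\sigma,\tau)$, then $K(c)$ is obtained from the previous level subcomplex by adjoining $\sigma$ and $\tau$; since $\sigma$ is a codimension-$1$ face of $\tau$ and, by the ordering, $\tau$ is the unique cell of the enlarged subcomplex properly containing $\sigma$, this is an elementary collapse, so the inclusion of the previous subcomplex is a deformation retract. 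If instead $c$ passes a value attained on an unmatched $p$-cell $\sigma$, then $K(c)$ is obtained from the previous subcomplex by attaching a single $p$-cell along its boundary sphere, which lies in that subcomplex because $K$ is regular (the attaching map of $\sigma$ is a homeomorphism onto a subcomplex of $K$ all of whose cells are faces of $\sigma$, hence already present). Running $c$ from $-\infty$ to $+\infty$, $K$ is thus built up by exactly $u_p$ cell-attachments in each dimension $p$, interleaved with deformation retracts; collapsing each deformation retract yields a CW complex with precisely $u_p$ cells of dimension $p$ in each degree that is homotopy equivalent to $K$.

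The construction of $f$ from a linear extension and the bookkeeping of the subcomplexes $K(c)$ are routine. The heart of the argument — and the step I expect to be the main obstacle — is the case analysis above: showing that at a non-critical value the newly added pair genuinely attaches as an \emph{elementary collapse} (this is where acyclicity is used: it is equivalent to $V$ arising from a discrete Morse function, which is what makes the level subcomplexes nested so that $\tau$ is the unique coface of $\sigma$ present when the pair is added, and it is also where the hypothesis that $\emptyset$ is unpaired enters, since $\emptyset$ is a face of every cell and could never be the free face of a collapse), and showing that at a critical value the new cell attaches along its entire boundary sphere inside the subcomplex already constructed (this rests on the regularity of $K$ together with the codimension-$1$ clause in the definition of a discrete vector field).
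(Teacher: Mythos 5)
The paper itself does not prove this theorem: both parts are quoted from Forman, cited as \cite[Theorems 6.2 and 6.3]{Forman02}, so your proposal is an attempt to supply a proof the paper deliberately outsources. Your part (i) is correct and complete, and it is essentially the standard argument behind Forman's Theorem 6.2: you adopt the paper's orientation of $H(V)$ (matched edges run from the larger cell to the smaller), observe that no directed walk can take two consecutive descending edges, use the dimension count to force strict alternation in any directed closed walk, and then read off the dictionary between directed cycles and reversed nontrivial closed $V$-paths, with $a_i\neq a_{i+1}$ coming from the matched/unmatched dichotomy at each upper cell. Apart from a harmless indexing slip in the relabelling of the reversed walk (and the unremarked but innocuous case of a walk through $\emptyset$, which your own alternation argument rules out), this part stands on its own.

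Part (ii), by contrast, is an outline rather than a proof, and the gap sits exactly where you predict. The assertion that when a matched pair $(\sigma,\tau)$ is reached ``by the ordering, $\tau$ is the unique cell of the enlarged subcomplex properly containing $\sigma$'' does not follow from merely choosing a linear extension of $H(V)$. Two concrete issues: first, with the paper's orientation the matched edge is $\tau\to\sigma$, so every linear extension puts $\tau$ strictly \emph{before} $\sigma$; to give matched cells a common value you must pass to the digraph obtained by contracting the matched edges, and the fact that this contraction is still acyclic is itself a lemma (a directed cycle in the contraction lifts, via a dimension count, to an alternating directed cycle in $H(V)$). Second, even then one must rule out that some other coface $\mu\gtrdot\sigma$ is already present in the level subcomplex, having been dragged in as a face of an earlier, higher-dimensional cell; nothing in a bare linear extension visibly forbids this, and proving that it cannot happen (equivalently, that $\sigma$ is genuinely new and a free face at that stage) is precisely the content of Forman's level-subcomplex lemmas, or, in the contracted-order formulation, of the observation that $x<y$ forces the class of $x$ to precede or equal the class of $y$. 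Finally, your last sentence compresses another real step: passing from ``$K$ is built by $u_p$ cell attachments in each dimension, interleaved with deformation retracts'' to ``$K$ is homotopy equivalent to a CW complex with exactly $u_p$ cells of dimension $p$'' needs the standard lemma on attaching cells through homotopy equivalences (pushing attaching maps down the retractions), which Forman proves separately. None of this is off-route---your plan is Forman's own---but as written part (ii) states its two key lemmas instead of proving them, so it cannot yet replace the citation.
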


\begin{proof}
Part (i) is \cite[Theorem 6.2]{Forman02} and part (ii) is \cite[Theorem 6.3]{Forman02}.
\end{proof}

Finding a complete acyclic matching for the CW complex associated with the hypersimplex will be the focus of Section \ref{s:Construction}. Theorem \ref{thm:Forman} (i) makes it much easier to show that this matching will be acyclic and this will be the focus of Section \ref{s:acyclicproof}.

\begin{eg}
Consider the hypersimplex $J(3,1)$ and let $V_1$ be the discrete vector field as in Example \ref{eg:Hasse0}. It can be seen by using brute force that this is not an acyclic partial matching because there exist several directed cycles, for example, there is one that follows the path $A,AC,C,BC,B,AB,A$. We could also prove this by using Theorem \ref{thm:Forman} (i) since $A$, $AB$, $B$, $BC$, $C$, $AC$, $A$ is a nontrivial closed $V_1$-path. Notice that the second path is the same as the first except in reverse order. This is because a directed cycle follows the arrows, while a $V$-path goes against the arrows by definition.

On the other hand let $V_2$ be the discrete vector field made up of the following pairs of faces:
\begin{center}
$(\emptyset, A), (B, AB), (C, AC),$ and $(BC, ABC).$
\end{center}
This does turn out to be an acyclic matching and in this case it is even a complete matching. We can see that this is acyclic by once again checking for any cycles in the diagram in Figure \ref{fig:Hasse} through brute force. Proving this otherwise takes a little bit more work, as will be seen in Section \ref{s:acyclicproof}.

\end{eg}

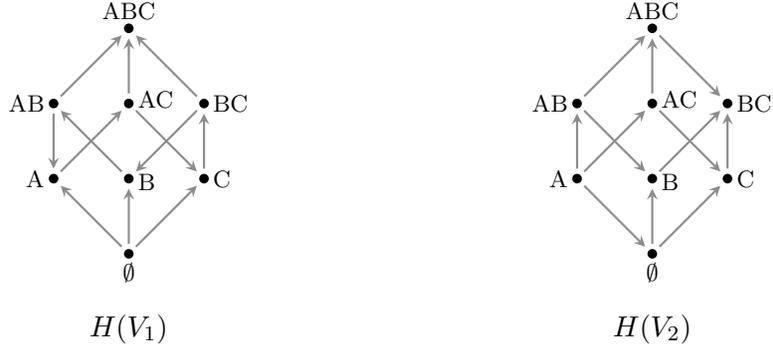
\begin{figure}[htbp]
\begin{center}
\begin{tikzpicture}[scale=1, >=stealth, thick, shorten >=2pt, shorten <=2pt]
\tikzset{shade t/.style ={gray!90}}
\tikzset{nodecirc/.style ={circle, draw, inner sep=1pt, fill=black}}
\node (e) at (0,0) [nodecirc] {};
\node (A) at (-1,1) [nodecirc] {};
\node (B) at (0,1) [nodecirc] {};
\node (C) at (1,1) [nodecirc] {};
\node (AB) at (-1,2) [nodecirc] {};
\node (AC) at (0,2) [nodecirc] {};
\node (BC) at (1,2) [nodecirc] {};
\node (ABC) at (0,3) [nodecirc] {};

\footnotesize
\node at (e) [below] {$\emptyset$};
\node at (A) [left] {A};
\node at (0,.95) [right] {B};
\node at (C) [right] {C};
\node at (AB) [left] {AB};
\node at (0,2.05) [right] {AC};
\node at (BC) [right] {BC};
\node at(ABC) [above] {ABC};
\normalsize

\draw [->] [shade t] (e) -- (A);
\draw [->] [shade t] (e) -- (B);
\draw [->] [shade t] (e) -- (C);
\draw [->] [shade t] (A) -- (AC);
\draw [<-] [shade t] (A) -- (AB);
\draw [->] [shade t] (B) -- (AB);
\draw [<-] [shade t] (B) -- (BC);
\draw [<-] [shade t] (C) -- (AC);
\draw [->] [shade t] (C) -- (BC);
\draw [->] [shade t] (AC) -- (ABC);
\draw [->] [shade t] (AB) -- (ABC);
\draw [->] [shade t] (BC) -- (ABC);

\node at (0,-1) {$H(V_1)$};

\end{tikzpicture}
\hspace{1.3 in}
\begin{tikzpicture}[scale=1, >=stealth, thick, shorten >=2pt, shorten <=2pt]
\tikzset{shade t/.style ={gray!90}}
\tikzset{nodecirc/.style ={circle, draw, inner sep=1pt, fill=black}}
\node (e) at (0,0) [nodecirc] {};
\node (A) at (-1,1) [nodecirc] {};
\node (B) at (0,1) [nodecirc] {};
\node (C) at (1,1) [nodecirc] {};
\node (AB) at (-1,2) [nodecirc] {};
\node (AC) at (0,2) [nodecirc] {};
\node (BC) at (1,2) [nodecirc] {};
\node (ABC) at (0,3) [nodecirc] {};

\footnotesize
\node at (e) [below] {$\emptyset$};
\node at (A) [left] {A};
\node at (0,.95) [right] {B};
\node at (C) [right] {C};
\node at (AB) [left] {AB};
\node at (0,2.05) [right] {AC};
\node at (BC) [right] {BC};
\node at(ABC) [above] {ABC};
\normalsize

\draw [<-] [shade t] (e) -- (A);
\draw [->] [shade t] (e) -- (B);
\draw [->] [shade t] (e) -- (C);
\draw [->] [shade t] (A) -- (AC);
\draw [->] [shade t] (A) -- (AB);
\draw [<-] [shade t] (B) -- (AB);
\draw [->] [shade t] (B) -- (BC);
\draw [<-] [shade t] (C) -- (AC);
\draw [->] [shade t] (C) -- (BC);
\draw [->] [shade t] (AC) -- (ABC);
\draw [->] [shade t] (AB) -- (ABC);
\draw [<-] [shade t] (BC) -- (ABC);

\node at (0,-1) {$H(V_2)$};

\end{tikzpicture}
\vspace{.3 in}
\caption{Example of a cyclic and an acyclic matching}
\label{fig:Hasse}
\end{center}
\end{figure}


\section{Construction of Matchings}
\label{s:Construction}

In this section we will define a family of matchings dependent on two integers $m_0$ and $m_1$, where $0\leq m_0\leq n-k-1$ and $1\leq m_1\leq k-1$. 
These two numbers will allow us to make adjustments to the matching relative to the faces $F(S)$ where $S(0)=m_0$ and $S(1)=m_1$. 

We start by matching the vertex $v_0 = (1,\ldots,1,0,\ldots,0)$ with $\emptyset$. 
Next we fix $m_0$ and $m_1$ and then match $F(S)$ with $F(S')$ where $S'$ is obtained from $S$ by doing one of the following replacements to $S$:

\begin{enumerate}
  \item If $S(1)$ $\leq k-1$ and there is a 1 to the right of the rightmost * and one of the following is true:
  \begin{enumerate}
    \item $S(1) \neq m_1$;
    \item $S(1) = m_1$ and $S(0)>m_0$; 
    \item $S(1) = m_1$, $S(0)=m_0$, and there is not a 0 to the left of the leftmost *;
  \end{enumerate}
then replace the rightmost 1 with a *.

  \item If $S(1)$ $\leq k-2$ and there is no 1 right of the rightmost *, and one of the following is true:
  \begin{enumerate}
    \item $S(1) \neq m_1 - 1$;
    \item $S(1) = m_1 - 1$ and $S(0)>m_0$;
    \item $S(1) = m_1 - 1$, $S(0)=m_0$, and there is not a 0 to the left of the leftmost *;
  \end{enumerate}
then replace the rightmost * with a 1. 
  \item If $S(1) = k-1$, $S(0)\leq n-k-1$, there is no 1 to the right of the rightmost *, and there is a 0 to the left of the leftmost *, then replace the leftmost 0 with a *.
  \item If $S(1) = k-1$, $S(0)\leq n-k-2$, there is no 1 to the right of the rightmost *, and there is no 0 to the left of the leftmost *, then replace the leftmost * with a 0. 
  \item If $S(1) = m_1$, $S(0)\leq m_0$, there is a 1 to the right of the rightmost *, and there is a 0 to the left of the leftmost *, then replace the leftmost 0 with a *.
  \item If $S(1) = m_1$, $S(0)<m_0$, there is a 1 to the right of the rightmost *, and there is no 0 to the left of the leftmost *, then replace the leftmost * with a 0. 
  \item If $S(1) = m_1 - 1$, $S(0)\leq m_0$, there is no 1 to the right of the rightmost *, and there is a 0 to the left of the leftmost *, then replace the leftmost 0 with a *.
  \item If $S(1) = m_1 - 1$, $S(0)<m_0$, there is no 1 to the right of the rightmost *, and there is no 0 to the left of the leftmost *, then replace the leftmost * with a 0. 
  \item If $S(1)=k$, $S(0)=n-k$, and $F(S)\neq v_0= (1,\ldots,1,0,\ldots,0)$, then replace the leftmost 0 and the rightmost 1 with a *.
  \item If $S(1)=k-1$, $S(0)=n-k-1$, there is no 1 to the right of rightmost *, and there is no 0 to the left of the leftmost *, then replace the leftmost * with a 0 and the remaining * with a 1.
 
\end{enumerate}

\begin{eg}
Let $n=8$, $k=3$, and consider the matching with $m_0=2$ and $m_1=1$, then:
\begin{center}
$F$(0100*0*1) and $F$(0100*0**) are matched by (1)(a) and (2)(a);
\\
$F$(0*00*0*1) and $F$(0*00*0**) are matched by (1)(b) and (2)(b);
\\
$F$(***0*0*1) and $F$(***0*0**) are matched by (1)(c) and (2)(c);
\\
$F$(0100*1*0) and $F$(*100*1*0) are matched by (3) and (4);
\\
$F$(0****0*1) and $F$(*****0*1) are matched by (5) and (6);
\\
$F$(0****0**) and $F$(*****0**) are matched by (7) and (8);
\\
$F$(10010100) and $F$(1*010*00) are matched by (9) and (10).
\end{center}
\end{eg}

We will later on refer back to these rules to describe faces. A face $F(S)$ of type 1 is one such that $S$ satisfies the conditions of (1)(a), (b), or (c), a face of type 3 is one such that $S$ satisfies the conditions of (3), and so on. 

\begin{rem}
\label{rem:matching}
There are a few things here worth pointing out. 
First of all, if $F(S)$ is a face of type 1--8 or 10, then $S(0)<n-k$ and $S(1)<k$ and hence $S$ contains at least two *'s. Therefore $F(S)$ is the convex hull of more than one point and cannot be a vertex.
Also notice that if we are given a face $F(S)$ then (1), (3), (5), and (7) replace a 0 or 1 in $S$ with a * and hence match this face to one of higher dimension. On the other hand (2), (4), (6), and (8) replace a * in $S$ with a 0 or 1 which matches the given face to one of lower dimension. Similarly (9) matches a vertex with an edge, which is a face of higher dimension, and (10) matches an edge to a vertex.

\end{rem}

\begin{lem}
\label{l:wd}
For any $m_0$ and $m_1$, the ten rules above partition the set of faces of $J(n,k)$, other than $\{v_0\}$, into subsets of faces of type $i$ where $1\leq i\leq10$.
\end{lem}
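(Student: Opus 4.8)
The plan is to run one exhaustive case analysis on the sequence that records a face, checking at each stage that exactly one rule applies. I would first recall from Proposition~\ref{prop:faces} that each face of $J(n,k)$ has a unique representative sequence $S$ of the enumerated form: a vertex is $\ast$-free with $S(1)=k$ and $S(0)=n-k$, and a face of positive dimension has $S(1)\le k-1$ and $S(0)\le n-k-1$, so that its number of $\ast$'s is $n-S(0)-S(1)\ge 2$ and a leftmost and a rightmost $\ast$ exist; it is to this $S$ that the ten rules are applied. What must be shown is that every such $S$ with $F(S)\ne\{v_0\}$ satisfies the hypotheses of at least one of the rules (1)--(10) (exhaustiveness) and of at most one (disjointness). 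The vertices are disposed of immediately: if $S(1)=k$ and $F(S)\ne v_0$, then since $1\le k\le n-1$ the sequence has at least one $0$ and at least one $1$, so rule (9) applies; and no other rule can apply, since each of (1)--(8) and (10) requires $S(1)\le k-1$. Thus the faces of type $9$ are precisely the vertices other than $v_0$.

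For a face of positive dimension I would organize the remaining argument as a decision tree. The first test is whether a $1$ occurs to the right of the rightmost $\ast$; this alone splits the candidate rules into the block $\{1,5,6\}$ and the block $\{2,3,4,7,8,10\}$, so no rule from one block can collide with a rule from the other. Within $\{1,5,6\}$ I would next ask whether $S(1)=m_1$: if not, only (1)(a) applies; if so, (1)(a) is excluded and I sort the face by $S(0)$ relative to $m_0$ together with whether a $0$ precedes the leftmost $\ast$, obtaining (1)(b) when $S(0)>m_0$, (1)(c) when $S(0)=m_0$ with no such $0$, (5) when $S(0)\le m_0$ with such a $0$, and (6) when $S(0)<m_0$ with no such $0$; these four cases form a genuine partition of the six combinations. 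Within $\{2,3,4,7,8,10\}$ I would branch on $S(1)=k-1$ versus $S(1)\le k-2$: in the first subcase the face goes to (3) if a $0$ precedes the leftmost $\ast$ and otherwise, using $S(0)\le n-k-1$, to (4) when $S(0)\le n-k-2$ and to (10) when $S(0)=n-k-1$; in the second subcase I ask whether $S(1)=m_1-1$, routing the face to (2)(a) if not and otherwise to exactly one of (2)(b), (2)(c), (7), (8) by the same dichotomy on $S(0)$ relative to $m_0$ and on the leftmost $0$. Each leaf of this tree names a single rule, and the conditions traversed to reach that leaf are literally the defining conditions of the rule named there, so exhaustiveness and disjointness come out together.

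The fiddly part --- and where an off-by-one slip would hide --- is confirming that the numerical side conditions that appear inside a rule but are not used as branch tests hold automatically on the leaves leading to that rule: for instance $S(0)\le n-k-1$ in (3), $S(1)\le k-2$ in (2), and $S(0)\le m_0$ in (5) and (7). For these I would invoke the canonical-form bounds $S(1)\le k-1$ and $S(0)\le n-k-1$ for positive-dimensional faces, together with the standing hypotheses $0\le m_0\le n-k-1$ and $1\le m_1\le k-1$, which also guarantee that the tests ``$S(1)=m_1$'' and ``$S(1)=m_1-1$'' are consistent with the ranges $S(1)\le k-1$ and $S(1)\le k-2$ in which they occur. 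I would finish with a pass over the extreme parameter values $m_1=1$, $m_1=k-1$, $m_0=0$ and $m_0=n-k-1$ to check that no leaf of the tree becomes empty or merges with a neighbouring one. None of this is conceptually deep, but it is long enough that I would carry it out rule by rule.
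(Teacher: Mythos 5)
Your proposal is correct and follows essentially the same route as the paper's proof: a case split on vertices (rule (9)) versus positive-dimensional faces, then on whether a 1 lies right of the rightmost $\ast$, then on $S(1)$ relative to $k-1$, $m_1$, $m_1-1$ and on $S(0)$ relative to $m_0$ and the position of the leftmost 0, using $m_1-1<k-1$ and the bounds $S(1)\le k-1$, $S(0)\le n-k-1$ exactly as the paper does. The only difference is presentational: you make the disjointness and side-condition checks explicit, which the paper leaves largely implicit.
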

\begin{proof}
Let $F(S)$ be a face of $J(n,k)$ other than $\{v_0\}$.
Notice that if $S$ contains no *'s then it only satisfies the conditions of (9). If we assume that $S$ has a 1 to the right of the rightmost * then it either satisfies (1), (5), or (6). 
If $S(1)\neq m_1$ then $S$ must satisfy (1)(a). If $S(1)=m_1$ and $S(0)>m_0$ then $S$ must satisfy (1)(b). If $S(1)= m_1$, $S(0)\leq m_0$, and there is not a 0 to the left of the leftmost * then $S$ satisfies (1)(c) when $S(0)=m_0$ and (6) otherwise. If $S(1)= m_1$, $S(0)\leq m_0$, and there is a 0 to the left of the leftmost * then $S$ satisfies (5).

Next we assume that $S$ does not have a 1 to the right of the rightmost * and that $S(1)=k-1$ and so it either satisfies (3), (4), or (10). If there
is no 0 to the left of the leftmost * in $S$ then $S$ satisfies (4) when $S(0)\leq n-k-2$ and satisfies (10) when $S(0)=n-k-1$. If there is a 0 to the left of the leftmost * in $S$ then $S$ only satisfies (3).

Finally, we assume that $S$ does not have a 1 to the right of the rightmost * and that $S(1)<k-1$ and so it either satisfies (2), (7), or (8) since $m_1-1 < k-1$. If $S(1)\neq m_1-1$ then $S$ must satisfy (2)(a). If $S(1)= m_1-1$ and $S(0)>m_0$ then $S$ must satisfy (2)(b). If $S(1)= m_1-1$, $S(0)\leq m_0$, and there is not a 0 to the left of the leftmost * then $S$ satisfies (2)(c) when $S(0)=m_0$ and (8) otherwise. If $S(1)= m_1-1$, $S(0)\leq m_0$, and there is a 0 to the left of the leftmost * then $S$ satisfies (7).
\end{proof}


Now that we have a description of the matchings that we will use, we need to show that they satisfy the conditions of being a complete acyclic matching. If we let $V$ be a collection of pairs of faces matched together by the rules above, then showing that $V$ is complete amounts to showing that every face appears in exactly one pair in $V$. Showing that $V$ is acyclic will be shown in Section \ref{s:acyclicproof}. For the remainder of this section and the next we will assume that we have a fixed matching, or in other words, that $m_0$ and $m_1$ are fixed.

\begin{defn}
Let $i,j\in\mathbb{Z}$ such that $1\leq i,j\leq 10$ and suppose that rule $(i)$ and rule $(j)$ are two of the ten rules from Section \ref{s:Construction}. 
We say that rule $(i)$ and rule $(j)$ are \textbf{inverses} of each other if both of the following are true:
\begin{enumerate}
\item [(i)] If $F(S)$ is matched with $F(S')$ by $(i)$, then $F(S')$ is matched with $F(S)$ by $(j)$.
\item [(ii)] If $F(S')$ is matched with $F(S)$ by $(j)$ , then $F(S)$ is matched with $F(S')$ by $(i)$.
\end{enumerate}
\end{defn}

\begin{lem} Consider the ten rules from Section \ref{s:Construction}.
\label{l:inv1}
\begin{enumerate}
\item [\rm{(i)}] The rules (1)(a) and (2)(a) are inverses of each other.
\item [\rm{(ii)}] The rules (1)(b) and (2)(b) are inverses of each other.
\item [\rm{(iii)}] The rules (1)(c) and (2)(c) are inverses of each other.
\end{enumerate}
\end{lem}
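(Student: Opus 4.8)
The plan is to verify the two conditions in the definition of inverse rules directly for each of the three pairs, observing that all three cases have essentially the same structure and differ only in the side conditions on $S(0)$ and $S(1)$. So I would first handle (i) in detail and then indicate that (ii) and (iii) follow by the same argument with the obvious modifications.

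\begin{proof}
We prove (i); the proofs of (ii) and (iii) are identical except that the side conditions ``$S(1)\neq m_1$'' are replaced by ``$S(1)=m_1$ and $S(0)>m_0$'' and by ``$S(1)=m_1$, $S(0)=m_0$, and there is not a $0$ to the left of the leftmost $*$'' respectively, together with the corresponding conditions with $m_1$ replaced by $m_1-1$ for rule (2).

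Suppose $F(S)$ is matched with $F(S')$ by (1)(a). Then $S(1)\le k-1$, $S$ has a $1$ to the right of its rightmost $*$, $S(1)\neq m_1$, and $S'$ is obtained from $S$ by replacing the rightmost $1$ with a $*$. We must check that $S'$ satisfies the hypotheses of (2)(a) and that applying rule (2) to $S'$ returns $S$. First, $S'(1)=S(1)-1\le k-2$, since $S(1)\le k-1$; here we also use that $S$ contains a $1$, which holds because $S$ has a $1$ to the right of its rightmost $*$. Next, the rightmost $*$ of $S'$ is exactly the position where we changed the $1$ to a $*$: indeed every entry of $S$ strictly to the right of that position is a $0$ or a $1$ (there are no $*$'s there, since we took the rightmost $1$, which lies to the right of the rightmost $*$ of $S$), so there is no $1$ to the right of the rightmost $*$ of $S'$. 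Finally $S'(1)=S(1)-1\neq m_1-1$ because $S(1)\neq m_1$. Hence $S'$ satisfies the hypotheses of (2)(a), and rule (2) replaces the rightmost $*$ of $S'$ — which is the position just described — with a $1$, recovering $S$.

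Conversely, suppose $F(S')$ is matched with $F(S)$ by (2)(a), so $S'(1)\le k-2$, $S'$ has no $1$ to the right of its rightmost $*$, $S'(1)\neq m_1-1$, and $S$ is obtained from $S'$ by replacing the rightmost $*$ with a $1$. Then $S(1)=S'(1)+1\le k-1$, and the newly created $1$ lies to the right of every remaining $*$ of $S$ (the remaining $*$'s of $S$ are exactly those $*$'s of $S'$ other than the rightmost one), and to the right of it $S$ agrees with $S'$, which has no $1$ there; so this newly created $1$ is the rightmost $1$ of $S$ and it lies to the right of the rightmost $*$ of $S$. Also $S(1)=S'(1)+1\neq m_1$. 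Hence $S$ satisfies the hypotheses of (1)(a), and rule (1) replaces the rightmost $1$ of $S$ by a $*$, recovering $S'$. Thus (1)(a) and (2)(a) are inverses of each other.

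For (ii) one argues in the same way, noting that $S(1)=m_1$ holds for $S$ if and only if $S'(1)=m_1-1$ holds for the corresponding $S'$, and that the condition $S(0)>m_0$ is unaffected since rules (1) and (2) do not alter any $0$'s; the replacement of a $1$ by a $*$ or vice versa changes $S(1)$ by $1$ but leaves $S(0)$ fixed. For (iii) one additionally uses that the position of the leftmost $*$, and hence the presence or absence of a $0$ to the left of it, is the same for $S$ and $S'$, because rules (1) and (2) only modify entries at or to the right of the rightmost $*$, which is never to the left of the leftmost $*$. The bookkeeping for these last two cases is routine.
\end{proof}

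The main obstacle, such as it is, is purely bookkeeping: one must be careful that ``the rightmost $*$ of $S'$'' really is the position that was just changed, and that the side conditions involving $m_0$, $m_1$, and the leftmost $*$ are preserved under the replacement. I expect no conceptual difficulty, only the need to state these positional observations cleanly once so that cases (ii) and (iii) can invoke them without repetition.
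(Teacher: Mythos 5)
Your proof is correct and follows essentially the same direct-verification approach as the paper; the only difference is organizational (you treat case (a) fully in both directions and sketch (b) and (c), whereas the paper writes out the forward direction for all three subcases and is brief on the converse). The positional observations you isolate (the new $*$ is the rightmost $*$ of $S'$, the leftmost $*$ and the $0$-counts are unaffected) are exactly the facts the paper relies on implicitly.
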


\begin{proof}
First suppose that $S$ satisfies the conditions of (1)(a) and so $S(1)\neq m_1$, $S(1)\leq k-1$, and there is a 1 to the right of the rightmost * in $S$. Then by (1), $S'$ is obtained from $S$ by replacing the rightmost 1 with a *, so $S'(1) = S(1)-1 \neq m_1-1$, $S'(1) = S(1)-1 \leq k-2$, and there is no 1 to the right of the rightmost * in $S'$. Therefore $S'$ satisfies the conditions of (2)(a) and hence will be matched to the face $F(S'')$ where $S''$ is obtained from $S'$ by replacing the rightmost * with a 1. This gives us $S'' = S$ as desired.

Next suppose that $S$ satisfies the conditions of (1)(b) and so $S(1) = m_1$, $S(1)\leq k-1$, $S(0)>m_0$, and there is a 1 to the right of the rightmost * in $S$. Then by (1), $S'$ is obtained from $S$ by replacing the rightmost 1 with a *, so $S'(1) = S(1)-1 = m_1-1$, $S'(1) = S(1)-1 \leq k-2$, $S'(0) = S(0)>m_0$, and there is no 1 to the right of the rightmost * in $S'$. Therefore $S'$ satisfies the conditions of (2)(b) and hence will be matched to the face $F(S'')$ where $S''$ is obtained from $S'$ by replacing the rightmost * with a 1. This gives us $S'' = S$ as desired.

Next suppose that $S$ satisfies the conditions of (1)(c) and so $S(1) = m_1$, $S(1)\leq k-1$, $S(0) = m_0$, there is a 1 to the right of the rightmost * in $S$, and there is not a 0 to the left of the leftmost *. Then by (1), $S'$ is obtained from $S$ by replacing the rightmost 1 with a *, so $S'(1) = S(1)-1 = m_1-1$, $S'(1) = S(1)-1 \leq k-2$, $S'(0) = S(0) = m_0$, there is no 1 to the right of the rightmost * in $S'$, and there is not a 0 to the left of the leftmost *. Therefore $S'$ satisfies the conditions of (2)(c) and hence will be matched to the face $F(S'')$ where $S''$ is obtained from $S'$ by replacing the rightmost * with a 1. This gives us $S'' = S$ as desired.

What remains to be shown in this case is that if we have a face $F(S')$ of type 2 then there is some face $F(S)$ of type 1 that matches with it. Notice that $S'$ has no 1 to the right of the rightmost * and hence when we consider the sequence $S$ obtained by replacing the rightmost * in $S'$ with a 1, we see that $S$ satisfies the conditions of (1) and that $F(S)$ will be matched with $F(S')$ as desired.
\end{proof}

\begin{lem}
\label{l:inv3}
The rules (3) and (4) are inverses of each other.
\end{lem}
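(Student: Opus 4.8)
The plan is to mirror the structure of the proof of Lemma \ref{l:inv1}, but now for the pair of rules (3) and (4), which move the ``action'' to the left end of the sequence rather than the right end. First I would unpack the hypothesis of rule (3): if $S$ satisfies (3), then $S(1)=k-1$, $S(0)\le n-k-1$, there is no 1 to the right of the rightmost *, and there is a 0 to the left of the leftmost *. Applying (3), the sequence $S'$ is obtained by replacing the leftmost 0 with a *, so I would record how each relevant statistic changes: $S'(1)=S(1)=k-1$, $S'(0)=S(0)-1\le n-k-2$, there is still no 1 to the right of the rightmost * (since we only altered a coordinate to the left of the leftmost *, which is itself left of the rightmost *), and — crucially — there is now no 0 to the left of the leftmost * in $S'$, because the leftmost 0 of $S$ was to the left of every *, so after converting it to a *, nothing remains to its left that could be a 0 lying left of the (new) leftmost *. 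Hence $S'$ satisfies precisely the hypotheses of rule (4), and rule (4) replaces the leftmost * of $S'$ with a 0, returning exactly $S$. That establishes condition (i) of the inverse definition.

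Next I would verify condition (ii): starting from a face $F(S')$ of type 4, so $S'(1)=k-1$, $S'(0)\le n-k-2$, no 1 right of the rightmost *, and no 0 left of the leftmost *. Rule (4) replaces the leftmost * of $S'$ with a 0, producing $S$ with $S(1)=k-1$, $S(0)=S'(0)+1\le n-k-1$, still no 1 to the right of the rightmost * (the rightmost * is unaffected or, in the degenerate case $S'$ has only one *, there is no * and $S(1)=k-1$ forces $S$ to be of a different type — but this is excluded because a type-4 face has $S'(0)<n-k$ and $S'(1)<k$, hence at least two *'s by Remark \ref{rem:matching}, so $S$ still has at least one *), and there is now a 0 to the left of the leftmost * of $S$, namely the 0 we just introduced. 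So $S$ is of type 3 and rule (3) matches $F(S)$ back to $F(S')$. As in Lemma \ref{l:inv1}, I would also remark that this last paragraph simultaneously shows every type-4 face is matched by some type-3 face, completing the bookkeeping needed for the pairing to be well-defined.

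The main obstacle I anticipate is the careful handling of the ``leftmost *'' and ``leftmost 0'' bookkeeping across the replacement, since the position of the leftmost * changes when we convert a 0 to its left into a *, and one must be sure the ``no 0 to the left of the leftmost *'' condition flips correctly in both directions. The subtle point is that in rule (3) the converted 0 is the \emph{leftmost} 0, which need not be the leftmost entry overall, but it is to the left of every *; I would want to argue explicitly that any 0 strictly left of the new leftmost * in $S'$ would have been a 0 strictly left of the original leftmost * in $S$ and also left of the converted entry, contradicting that we converted the leftmost 0. A secondary point to watch is the inequality arithmetic on $S(0)$ — confirming $S(0)\le n-k-1$ in (3) corresponds to $S'(0)\le n-k-2$ in (4) and vice versa, which is where the two distinct bounds in the statements of (3) and (4) pull their weight. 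Everything else is a routine translation of the Lemma \ref{l:inv1} template.
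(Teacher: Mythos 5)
Your proposal is correct and follows essentially the same two-direction argument as the paper: track how $S(0)$, $S(1)$, and the positions of the leftmost 0 and leftmost * change under each replacement, verify the hypotheses of the other rule, and observe that the inverse replacement recovers the original sequence. The extra care you take with the ``leftmost *'' bookkeeping and the degenerate one-* case is sound but not needed beyond what the paper records.
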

\begin{proof}
Suppose first that $S$ satisfies the conditions of (3) and so $S(1)=k-1$, $S(0)\leq n-k-1$, there is no 1 to the right of the rightmost *, and there is a 0 to the left of the leftmost *. Then by (3), $S'$ is obtained from $S$ by replacing the leftmost 0 with a *, so $S'(1) = S(1)=k-1$, $S'(0)= S(0)-1\leq n-k-2$, there is no 1 to the right of the rightmost *, and there is no 0 to the left of the leftmost *. Therefore $S'$ satisfies the conditions of (4) and hence will be matched to the face $F(S'')$ where $S''$ is obtained from $S'$ by replacing the leftmost * with a 0. This gives us $S'' = S$ as desired.

Now let $S$ satisfy the conditions of (4) and so $S(1)=k-1$, $S(0)\leq n-k-2$, there is no 1 to the right of the rightmost *, and there is no 0 to the left of the leftmost *. Then by (4), $S'$ is obtained from $S$ by replacing the leftmost * with a 0, so $S'(1) = S(1)=k-1$, $S'(0)= S(0)+1\leq n-k-1$, there is no 1 to the right of the rightmost *, and there is a 0 to the left of the leftmost *. Therefore $S'$ satisfies the conditions of (3) and hence will be matched to the face $F(S'')$ where $S''$ is obtained from $S'$ by replacing the leftmost 0 with a *. This gives us $S'' = S$ as desired.
\end{proof}

\begin{lem}
\label{l:inv5}
The rules (5) and (6) are inverses of each other and the rules (7) and (8) are inverses of each other.
\end{lem}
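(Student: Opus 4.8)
The plan is to prove Lemma~\ref{l:inv5} by the same bookkeeping argument used for Lemmas~\ref{l:inv1} and~\ref{l:inv3}: for each of the two pairs of rules, start with a sequence $S$ satisfying the hypotheses of the lower-numbered (dimension-raising) rule, apply the prescribed replacement to get $S'$, verify that $S'$ satisfies the hypotheses of the higher-numbered (dimension-lowering) rule, and check that applying that rule to $S'$ returns $S$; then run the argument in the opposite direction. I would state up front that rules (5) and (6) both replace/restore a symbol adjacent to the leftmost $*$ while leaving everything to the right of the rightmost $*$ untouched, and similarly for (7) and (8), so the bookkeeping is entirely about tracking $S(0)$, $S(1)$, the position of the leftmost $*$, and whether there is a $0$ to its left.

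For the pair (5)/(6): suppose $S$ satisfies (5), so $S(1)=m_1$, $S(0)\le m_0$, there is a $1$ to the right of the rightmost $*$, and there is a $0$ to the left of the leftmost $*$. Rule (5) replaces the leftmost $0$ with a $*$ to produce $S'$, so $S'(1)=S(1)=m_1$, $S'(0)=S(0)-1$, hence $S'(0)<m_0$ (since $S(0)\le m_0$); there is still a $1$ to the right of the rightmost $*$ (nothing right of it changed); and since the newly created $*$ sits at the old position of the leftmost $0$, which was to the left of the old leftmost $*$, it is now the leftmost $*$ and there is no $0$ to its left. Thus $S'$ satisfies (6), and (6) replaces the leftmost $*$ in $S'$ by a $0$, which recreates $S$. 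Conversely, if $S$ satisfies (6) — $S(1)=m_1$, $S(0)<m_0$, a $1$ right of the rightmost $*$, no $0$ left of the leftmost $*$ — then replacing the leftmost $*$ by a $0$ gives $S'$ with $S'(1)=m_1$, $S'(0)=S(0)+1\le m_0$, a $1$ still to the right of the (new, necessarily further-right) rightmost $*$, and a $0$ (the one just inserted) to the left of the new leftmost $*$; the only subtle point is that $S'$ still has at least one $*$, which holds because a face of type 6 has $S(0)<n-k$ and $S(1)<k$ (as noted in Remark~\ref{rem:matching}), so $S$ had at least two $*$'s. Hence $S'$ satisfies (5) and applying (5) returns $S$.

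The pair (7)/(8) is handled identically, with $m_1$ replaced throughout by $m_1-1$ and ``a $1$ to the right of the rightmost $*$'' replaced by ``no $1$ to the right of the rightmost $*$'' (which is preserved by both operations since they only touch positions at or left of the leftmost $*$): from (7) one gets $S'(1)=m_1-1$, $S'(0)=S(0)-1<m_0$, no $1$ right of the rightmost $*$, no $0$ left of the new leftmost $*$, so $S'$ satisfies (8); and from (8), $S'(0)=S(0)+1\le m_0$ and a $0$ reappears left of the leftmost $*$, so $S'$ satisfies (7), with the same remark about $S'$ retaining a $*$. I would close by remarking that combining this lemma with Lemmas~\ref{l:inv1} and~\ref{l:inv3} pairs up rules $1\leftrightarrow2$, $3\leftrightarrow4$, $5\leftrightarrow6$, $7\leftrightarrow8$ and (separately) $9\leftrightarrow10$, which together with Lemma~\ref{l:wd} will give that $V$ is a well-defined complete matching. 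I do not anticipate a genuine obstacle here; the only place to be careful is confirming in each direction that the modified sequence still contains a $*$ (so that the inverse rule is even applicable) and that the created/destroyed symbol genuinely becomes/was the leftmost $*$ — both of which follow from the sign conditions $S(0)<n-k$, $S(1)<k$ recorded in Remark~\ref{rem:matching}.
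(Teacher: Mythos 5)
Your proposal is correct and follows essentially the same route as the paper, which simply states that the proof is analogous to that of Lemma~\ref{l:inv3}; you carry out exactly that bookkeeping, and your extra care that the modified sequence still contains a $*$ (via Remark~\ref{rem:matching}) is a sound addition rather than a deviation.
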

\begin{proof}
This proof is similar to that of Lemma \ref{l:inv3}.
\end{proof}

\begin{lem}
\label{l:inv9}
The rules (9) and (10) are inverses of each other.
\end{lem}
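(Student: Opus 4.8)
The plan is to follow the same pattern used in the proofs of Lemmas~\ref{l:inv1} and~\ref{l:inv3}: verify both directions of the definition of \textbf{inverses} directly from the statements of rules (9) and (10). First I would take a sequence $S$ satisfying the conditions of (9), namely $S(1)=k$, $S(0)=n-k$, and $F(S)\neq v_0$. Then $S$ has no $*$'s, so $S$ is a $(0,1)$-sequence with exactly $k$ ones and $n-k$ zeros, and since $F(S)\neq v_0$ the sequence is not $(1,\ldots,1,0,\ldots,0)$; in particular there is a $0$ somewhere to the left of the rightmost $1$, so both the ``leftmost $0$'' and the ``rightmost $1$'' exist and occur in that order (the leftmost $0$ is strictly left of the rightmost $1$). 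Applying (9), $S'$ is obtained by changing the leftmost $0$ and the rightmost $1$ each to a $*$, so $S'(1)=k-1$ and $S'(0)=n-k-1$; moreover, since the changed $1$ was the rightmost $1$ of $S$, there is no $1$ to the right of the rightmost $*$ in $S'$, and since the changed $0$ was the leftmost $0$ of $S$, there is no $0$ to the left of the leftmost $*$ in $S'$. Hence $S'$ satisfies exactly the hypotheses of (10), and (10) replaces the leftmost $*$ with a $0$ and the remaining $*$ with a $1$. I must check this reconstructs $S$: the leftmost $*$ of $S'$ is the position of the former leftmost $0$ (it lies left of the other $*$), so it becomes a $0$ again, and the other $*$ becomes a $1$ again, giving $S''=S$.

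Next I would run the reverse direction: let $S$ satisfy the conditions of (10), so $S(1)=k-1$, $S(0)=n-k-1$, there is no $1$ right of the rightmost $*$, and no $0$ left of the leftmost $*$. Since $S(1)+S(0)=n-2$, $S$ has exactly two $*$'s. By (10), $S'$ is obtained by turning the leftmost $*$ into a $0$ and the other $*$ into a $1$, so $S'(1)=k$, $S'(0)=n-k$, and $S'$ has no $*$'s. I then need $F(S')\neq v_0$: the new $0$ (former leftmost $*$) sits to the left of the new $1$ (former rightmost $*$), so $S'$ has a $0$ strictly to the left of a $1$ and therefore is not the sorted sequence $(1,\ldots,1,0,\ldots,0)$; thus $F(S')\neq v_0$ and $S'$ satisfies (9). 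Finally I check that applying (9) to $S'$ returns $S$: the leftmost $0$ of $S'$ is the position of the former leftmost $*$ --- here I use that there was no $0$ to the left of the leftmost $*$ in $S$, so after turning that $*$ into a $0$ it becomes genuinely the leftmost $0$ of $S'$ --- and the rightmost $1$ of $S'$ is the position of the former rightmost $*$, since $S$ had no $1$ to the right of it. So (9) changes exactly those two positions back to $*$, yielding $S''=S$.

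The only subtlety, and the step I expect to need the most care, is the bookkeeping about \emph{which} $*$ is ``leftmost'' and \emph{which} $0$/$1$ is ``leftmost''/``rightmost'' after a replacement --- i.e.\ making sure the two flavors of extremality line up so that (9) and (10) are genuinely mutually inverse on the nose rather than merely on the level of $S(0)$ and $S(1)$. This is exactly where the side conditions ``no $1$ to the right of the rightmost $*$'' and ``no $0$ to the left of the leftmost $*$'' in (10), and the condition $F(S)\neq v_0$ in (9), do their work, and I would make a point of citing each of them at the moment it is used. (For completeness I would also note in passing that $F(v_0)$ is excluded from both rules and is instead matched with $\emptyset$, so no face is claimed by both (9) and its trivial partner --- but that is really part of Lemma~\ref{l:wd} rather than this lemma.) Modulo this indexing care, the argument is a short verification with no genuine obstacle.

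\begin{proof}
Suppose first that $S$ satisfies the conditions of (9), so $S(1)=k$, $S(0)=n-k$, and $F(S)\neq v_0$. Then $S$ contains no $*$'s, and since $S$ is not the sorted sequence $(1,\ldots,1,0,\ldots,0)$, its leftmost $0$ occurs strictly to the left of its rightmost $1$. Let $S'$ be obtained from $S$ by replacing the leftmost $0$ and the rightmost $1$ each with a $*$. Then $S'(1)=k-1$ and $S'(0)=n-k-1$; since the $1$ that was replaced was the rightmost $1$ of $S$, there is no $1$ to the right of the rightmost $*$ in $S'$; and since the $0$ that was replaced was the leftmost $0$ of $S$, there is no $0$ to the left of the leftmost $*$ in $S'$. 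Hence $S'$ satisfies the conditions of (10), so $F(S')$ is matched with $F(S'')$ where $S''$ is obtained from $S'$ by replacing the leftmost $*$ with a $0$ and the remaining $*$ with a $1$. The leftmost $*$ of $S'$ is the position of the former leftmost $0$ of $S$ (it lies to the left of the other $*$), so it becomes a $0$ again, and the remaining $*$ becomes a $1$ again; thus $S''=S$, as desired.

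Now suppose that $S$ satisfies the conditions of (10), so $S(1)=k-1$, $S(0)=n-k-1$, there is no $1$ to the right of the rightmost $*$, and there is no $0$ to the left of the leftmost $*$. Then $S$ contains exactly two $*$'s. Let $S'$ be obtained from $S$ by replacing the leftmost $*$ with a $0$ and the remaining $*$ with a $1$. Then $S'(1)=k$, $S'(0)=n-k$, and $S'$ contains no $*$'s. Moreover the new $0$ (former leftmost $*$) lies strictly to the left of the new $1$ (former rightmost $*$), so $S'$ is not the sorted sequence $(1,\ldots,1,0,\ldots,0)$, and hence $F(S')\neq v_0$. Therefore $S'$ satisfies the conditions of (9), so $F(S')$ is matched with $F(S'')$ where $S''$ is obtained from $S'$ by replacing the leftmost $0$ and the rightmost $1$ each with a $*$. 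Since $S$ had no $0$ to the left of its leftmost $*$, the new $0$ in $S'$ is the leftmost $0$ of $S'$; since $S$ had no $1$ to the right of its rightmost $*$, the new $1$ in $S'$ is the rightmost $1$ of $S'$. Hence replacing these two entries with $*$'s recovers $S$, so $S''=S$, as desired.
\end{proof}
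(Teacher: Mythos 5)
Your proof is correct and follows essentially the same two-direction verification as the paper's own proof, including the key observation (phrased contrapositively in the paper) that the output of rule (10) cannot be $v_0$ because the former leftmost $*$ yields a $0$ lying strictly left of the new $1$. The extra bookkeeping you add about which $*$/$0$/$1$ is extremal after each replacement is sound and only makes explicit what the paper leaves implicit.
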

\begin{proof}
Suppose first that $S$ satisfies the conditions of (9) and so $S(1)=k$, $S(0)=n-k$, and $S\neq 1\ldots 10\ldots 0$. Then by (9), $S'$ is obtained from $S$ by replacing the leftmost 0 and the rightmost 1 with a *, so $S'(1)=S(1)-1=k-1$, $S'(0)=S(0)-1=n-k-1$, there is no 1 to the right of rightmost *, there is no 0 to the left of the leftmost *. Therefore $S'$ satisfies the conditions of (10) and hence will be matched to the face $F(S'')$ where $S''$ is obtained from $S'$ by replacing the leftmost * with a 0 and the remaining * with a 1. This gives us $S'' = S$ as desired.

Next let $S$ satisfy the conditions of (10) and so $S(1) = k-1$, $S(0) = n-k-1$, there is no 1 to the right of rightmost *, and there is no 0 to the left of the leftmost *. Then by (10), $S'$ is obtained from $S$ by replacing the leftmost * with a 0 and the rightmost * with a 1. 
If $F(S')=F(1\ldots 10\ldots 0)=\{v_0\}$ then that would mean the leftmost * in $S$ was to the right of the rightmost * in $S$ which is a contradiction. So $S'(1)=S(1)+1=k$, $S'(0)=S(0)+1=n-k$, and $F(S')\neq \{v_0\}$. Therefore $S'$ satisfies the conditions of (9) and hence will be matched to the face $F(S'')$ where $S''$ is obtained from $S'$ by replacing the leftmost 0 and the rightmost 1 with a *. This gives us $S''=S$ as desired.
\end{proof}

\begin{prop}
Fix $m_0$ and $m_1$ and let $V$ be the corresponding collection of pairs of matched faces. Every face appears in exactly one pair in $V$ and hence $V$ is a discrete vector field and furthermore, $V$ is a complete matching.
\end{prop}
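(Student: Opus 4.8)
The plan is to combine the results of this section into a bookkeeping argument. By construction $V$ consists of the single pair $(\emptyset, v_0)$ together with all pairs $\{F(S), F(S')\}$ produced by the ten rules, so it suffices to check three things: first, that every face $F(S)\neq\{v_0\}$ belongs to exactly one pair coming from the rules; second, that $\emptyset$ and $v_0$ each belong to exactly one pair of $V$; and third, that in every pair the smaller cell is a codimension-one face of the larger.

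For the first point I would start from Lemma \ref{l:wd}, which assigns to every face $F(S)\neq\{v_0\}$ a unique type $i\in\{1,\dots,10\}$. Lemmas \ref{l:inv1}, \ref{l:inv3}, \ref{l:inv5} and \ref{l:inv9} say precisely that the odd-numbered rules $1,3,5,7,9$ (with rule $1$ broken into its three refinements, and likewise rule $2$) are bijections from the faces of types $1,3,5,7,9$ onto the faces of types $2,4,6,8,10$ respectively, with rules $2,4,6,8,10$ as their inverses. So if $F(S)$ has odd type $i$, rule $i$ matches it to one well-defined face $F(S')$; the appropriate inverse lemma shows $F(S')$ has type $i+1$ and rule $i+1$ matches $F(S')$ back to $F(S)$, so the unordered pair $\{F(S),F(S')\}$ is unambiguous and is the unique pair containing either of its members. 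Symmetrically, a face of even type $i+1$ is matched by rule $i+1$ to a unique face of odd type $i$, giving the same pair. Hence every face other than $\{v_0\}$ lies in exactly one pair arising from the rules.

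For the second point, $\emptyset$ is not of the form $F(S)$ and appears in none of the ten rules, so $(\emptyset,v_0)$ is the only pair containing it; and $v_0$ is explicitly excluded from the typing of Lemma \ref{l:wd}, so no rule touches it, whence $(\emptyset,v_0)$ is the only pair containing $v_0$. Together with the first point this shows that every cell of the extended CW complex lies in exactly one pair of $V$. For the third point I would invoke Remark \ref{rem:matching}: rules $1,3,5,7,9$ raise dimension by exactly one and rules $2,4,6,8,10$ lower it by exactly one, while $\emptyset$ and $v_0$ have dimensions $-1$ and $0$, so every pair is a codimension-one containment. These three facts show that $V$ is a discrete vector field in which every cell is matched, i.e.\ a complete matching.

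I do not expect a genuine obstacle here, since all of the substance is already contained in Lemma \ref{l:wd} and Lemmas \ref{l:inv1}--\ref{l:inv9}. The one point that needs care is confirming that the collection of pairs is genuinely well defined as a set of unordered pairs — that no face of even type is the image of two distinct faces under an odd rule, and that the odd and even rules hit exactly the correspondingly typed faces, so that nothing is matched twice or omitted — but this is precisely what the ``inverses'' lemmas were designed to supply, so the proof reduces to invoking Lemma \ref{l:wd} and those lemmas in the right combination.
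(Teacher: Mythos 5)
Your proposal is correct and follows essentially the same route as the paper: Lemma \ref{l:wd} gives that every face other than $\{v_0\}$ falls under exactly one rule, the inverse lemmas \ref{l:inv1}--\ref{l:inv9} make the resulting pairs well defined and unique, and Remark \ref{rem:matching} supplies the codimension-one condition. The paper's own proof is just a terser statement of this same combination, so no further comparison is needed.
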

\begin{proof}
Lemma \ref{l:wd} shows that every face is matched with at least one other face and
Lemmas \ref{l:inv1}--\ref{l:inv9} show that every face is matched to at most one other face. Remark \ref{rem:matching} points out that for every pair of faces, one face is a codimension 1 face of the other and the assertions follow.
\end{proof}


\section{Proof that the Matchings are Acyclic}
\label{s:acyclicproof}

First recall that $S$ is a sequence of 0's, 1's, and *'s. For notational purposes it will be helpful to break up $S$ into subsequences of only 0's and 1's and subsequences of only *'s. From now let $f_i$ denote either a sequence of 0's and 1's or $\emptyset$ where $1\leq i\leq n$. Also let $f'_i$ be a subsequence of $f_i$, let $0 \cdots 0$ denote a sequence of 0's or possibly $\emptyset$, and let $1 \cdots 1$ denote a sequence of 1's or possibly~$\emptyset$. 

\begin{eg}
Suppose we have an edge given by $F(S)$. Since $F(S)$ is an edge, $S$ has exactly two *'s and therefore we can write $F(S)=F(f_1*f_2*f_3)$. Suppose also that $1\in f_3$, then in order to help show the location of the rightmost 1 in $f_3$ we could also write $f_3 = f'_310 \cdots 0$ since it is only possible for either a sequence of 0's or $\emptyset$ to be to the right of the rightmost 1 in $f_3$.
\end{eg}

Next, following the language of Forman \cite{Forman02} introduced in Section \ref{s:Morse}, if $K$ is the CW complex formed by the faces of $J(n,k)$ then let $V$ be the discrete vector field on $K$ defined by the collection of pairs of matched faces. Recall that a $V$-path is a sequence of cells
\begin{center}
$a_0,b_0,a_1,b_1,a_2,\ldots,b_r,a_{r+1}$
\end{center}
such that for each $i = 0,\ldots,r,$ each of $a_i$ and $a_{i+1}$ is a codimension 1 face of $b_i$, each $(a_i,b_i)$ belongs to $V$ (hence $a_i$ is matched with $b_i$), and $a_i\neq a_{i+1}$ for all $0\leq i\leq r$. If $r\geq 0$, we call the $V$-path nontrivial, and if $a_0 = a_{r+1}$, we call the $V$-path closed.

\begin{defn}
\label{def:concat}
Given two $V$-paths $a_0,b_0,a_1,\ldots,b_r,a_{r+1}$ and $a'_0,b'_0,a'_1,\ldots,b'_s,a'_{s+1}$ such that $a_{r+1} = a'_0$, define their \textbf{concatenation} to be the following $V$-path: 
\begin{center}
$a_0,b_0,a_1,\ldots,b_r,a'_0,b'_0,a'_1,\ldots,b'_s,a'_{s+1}$.
\end{center}
\end{defn}

We will now show in Lemmas \ref{lem:acyc1}--\ref{lem:acyc7} that there are no nontrivial, closed $V$-paths when $a_0$ is not a vertex and deal with the case when it is a vertex in Lemma \ref{lem:acyc8}.

\begin{lem}
\label{lem:acyc1}
Given a $V$-path $a_0,b_0,a_1$ where $a_0 = F(S) = F(f_1*\cdots f_i*f_{i+1})$ is of type 1, either $a_1 = F(S')$ is of type 1, 2, 5, 6, or 10 and going from $S$ to $S'$ replaces the rightmost 1 with a 0 or $a_1 = F(S')$ is of type 2, 3, 4, 7, 8, or 10, going from $S$ to $S'$ replaces the rightmost 1 with a *, and this new * is the rightmost * in $S'$.
\end{lem}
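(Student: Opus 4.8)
The plan is to trace the $V$-path $a_0, b_0, a_1$ step by step, using the definition of a $V$-path together with the explicit description of the ten matching rules. Since $a_0 = F(S)$ is of type 1, it is matched by one of rules (1)(a), (1)(b), (1)(c), each of which replaces the rightmost $1$ in $S$ with a $*$. Thus $b_0 = F(S')$ where $S'$ is obtained from $S$ by that replacement, and the new $*$ is the rightmost $*$ in $S'$; moreover $\dim b_0 = \dim a_0 + 1$.

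Next I would enumerate the codimension-$1$ faces $a_1$ of $b_0$ that are distinct from $a_0$. A codimension-$1$ face of $F(S')$ is obtained by changing exactly one $*$ of $S'$ to a $0$ or a $1$ (subject to the constraint that the resulting sequence still has the correct number of $1$'s available, i.e. $S'(1) + (\text{number of }*\text{'s}) \geq k \geq S'(1)$ still holds after the change — but since we only remove one $*$ this is automatic given that $F(S')$ is itself a legitimate face). The face $a_0 = F(S)$ corresponds to changing the rightmost $*$ of $S'$ back to a $1$, so $a_1 \neq a_0$ forces one of: (i) change some $*$ of $S'$ to a $1$, where that $*$ is \emph{not} the rightmost $*$; (ii) change some $*$ of $S'$ to a $0$; or, to be matched with $b_0$ in a $V$-path, we actually need $(a_1, b_0) \in V$, i.e. $a_1$ is matched \emph{upward} to $b_0$. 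So the real task is: among the codimension-$1$ faces $a_1$ of $b_0$ with $a_1 \neq a_0$, determine which ones have $b_0$ as their matched partner, and for each such $a_1$ read off its type and how $S \to S'$ (the net change from $a_0$'s sequence to $a_1$'s sequence) acts on the rightmost $1$.

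The key computation is then a case analysis on \emph{which} $*$ of $S'$ gets filled in to produce $a_1$, and with \emph{what} symbol, keeping in mind two facts: $b_0 = F(S')$ has no $1$ to the right of its rightmost $*$ (that $1$ became a $*$), and $a_1$ must be matched to $b_0$ via an "upward" rule, namely one of (1), (3), (5), (7), (9) applied to the sequence underlying $a_1$. I expect to find: if the filled-in $*$ is the rightmost $*$ of $S'$ and it becomes a $0$, then the net effect on $S$ is "rightmost $1 \to 0$" and $a_1$ is of type 1, 2, 5, 6, or 10 (the possible types of a face matched upward by replacing its rightmost symbol situation appropriately — here $a_1$ differs from $a_0$ only in that its last relevant coordinate is $0$ instead of $1$); if instead some other $*$ is filled in, or the rightmost $*$ becomes a $1$ (impossible, that would give back $a_0$), then $S \to S'$ as seen from $a_0$ consists of the replacement "rightmost $1 \to *$" together with possibly re-examining — but wait, in this branch $a_1$ still contains that $*$, so going from $S$ (which has the $1$) to $S'$ (the sequence of $a_1$, which has a $*$ there) is exactly "replace the rightmost $1$ with a $*$", and this $*$ is indeed the rightmost $*$ of the new sequence since $S'$ had no $1$ to its right and we only filled in a $*$ strictly to the left. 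In that branch $a_1$ is matched upward to $b_0$ by a rule of type 2, 3, 4, 7, 8, or 10 (the "downward-facing" partner types, viewed from the larger face). Checking that each listed type genuinely arises, and that no \emph{other} type can, is the heart of the argument.

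The main obstacle I anticipate is the bookkeeping around the side conditions involving $m_0$, $m_1$, and the presence or absence of a $0$ to the left of the leftmost $*$: these conditions determine which of rules (1)(a)--(c), (3), (5), (7), (9), (10) actually applies to $a_1$, and one must verify that in every configuration consistent with "$a_1$ is a codimension-$1$ face of $b_0$ distinct from $a_0$ and matched to $b_0$", the resulting type falls into the asserted list and the asserted description of $S \to S'$ holds. I would organize this as a table indexed by (which $*$ of $S'$ is filled in) $\times$ (filled with $0$ or $1$) $\times$ (the relevant $S(0), S(1)$ comparisons with $m_0, m_1$), discarding the rows where $a_1$ is not matched to $b_0$ or equals $a_0$, and reading the conclusion off the surviving rows; Lemma \ref{l:wd} guarantees the type of $a_1$ is well defined, which keeps the table finite and the analysis closed.
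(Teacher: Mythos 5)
There is a genuine conceptual error in your setup. You assert that for the $V$-path $a_0,b_0,a_1$ we ``actually need $(a_1,b_0)\in V$, i.e.\ $a_1$ is matched upward to $b_0$,'' and you propose to restrict attention to those codimension-$1$ faces of $b_0$ whose matched partner is $b_0$. The definition of a $V$-path imposes no such condition: it requires $(a_i,b_i)\in V$ only for the pairs with equal index, while $a_{i+1}$ need only be a codimension-$1$ face of $b_i$ distinct from $a_i$. Indeed $(a_1,b_0)\in V$ is impossible here, since $b_0$ is already matched with $a_0$ and every cell lies in at most one pair of the discrete vector field; taken literally, your restriction leaves no candidates for $a_1$ and makes the lemma vacuous. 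It also contradicts the lemma's own conclusion, which allows $a_1$ to be of type 2, 4, 8, or 10 --- types that are matched \emph{downward}, to a face of smaller dimension. The correct task is to enumerate \emph{all} codimension-$1$ faces $a_1\neq a_0$ of $b_0$ (fill the rightmost $*$ of $b_0$'s sequence with a $0$, or fill any other $*$ with a $0$ or a $1$) and then determine the possible types of $a_1$ by applying the classification of Lemma \ref{l:wd} to $a_1$'s own sequence.

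Your reading of the net effect on $S$ is right and agrees with the paper's two-case split: filling the rightmost $*$ with a $0$ realizes ``rightmost $1\to 0$,'' and filling any other $*$ realizes ``rightmost $1\to *$'' with that $*$ remaining rightmost. But the type analysis --- which you defer to an unbuilt table and describe as ``the heart of the argument'' --- is where the misframing would bite. The paper does it by exclusion from structural facts about $a_1$'s sequence: in the first case $S'(1)<S(1)\leq k-1$ rules out types 3, 4, and 9, and assuming type 7 or 8 forces $S(0)<m_0$ and $S(1)=m_1$, contradicting $a_0$ being of type 1, leaving types 1, 2, 5, 6, 10; in the second case there is no $1$ to the right of the rightmost $*$ of $a_1$'s sequence, which rules out types 1, 5, 6 (and 9), leaving types 2, 3, 4, 7, 8, 10. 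Nothing in this argument refers to how $a_1$ is matched relative to $b_0$. You should also note that the lemma only claims $a_1$ lies in the stated list of types, so there is no need to verify that each listed type ``genuinely arises.''
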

\begin{proof}
In this case $f_{i+1} = f'_{i+1}10 \cdots 0$ and so 
\begin{center}
$a_0 = F(f_1*\cdots f_i*f'_{i+1}10 \cdots 0$) and $b_0 = F(f_1*\cdots f_i*f'_{i+1}*0 \cdots 0)$.
\end{center}
In order to choose $a_1$, we can replace any * except the rightmost with 0 or 1 or replace the rightmost * with 0 since $a_1\neq a_0$. 
Either way, $S'(1) \leq S(1) \leq k-1$ and so $a_1$ cannot be of type 9.
First consider the case when we replace the rightmost * with 0 making 
\begin{center}
$a_1 = F(f_1*\cdots f_i*f'_{i+1}00 \cdots 0) = F(S')$.
\end{center}

If we assume that $a_1$ is of type 7 or 8 then $S(0)=S'(0)-1<m_0$ and $S(1)=S'(1)+1=m_1$ and hence $a_0$ would not be of type 1 which is a contradiction. Also, $a_1$ cannot be of type 3 or 4 since $S'(1) < S(1) \leq k-1$ and so $a_1$ must be of type 1, 2, 5, 6, or 10. Either way the result from $S$ to $S'$ was that we replaced the rightmost 1 with a 0.

Next consider the case when we replace any * except the rightmost with a 0 or 1, then we will write $f_1*\cdots f_i*f'_{i+1}$ as $f_1\cdots f'_{i+1}$ (since we do not know and it will not matter which * was replaced) and hence
\begin{center}
$a_1 = F(f_1\cdots f'_{i+1}*0 \cdots 0)$.
\end{center}

Notice that there is no 1 to the right of the rightmost * and so $a_1$ is of type 2, 3, 4, 7, 8, or 10. Either way the result from $S$ to $S'$ was that we replaced the rightmost 1 with a *.
\end{proof}

\begin{lem}
\label{lem:acyc2}
Given a $V$-path $a_0,b_0,a_1$ where $a_0 = F(S) = F(f_1*\cdots f_i*f_{i+1})$ is of type 3, either $a_1 = F(S')$ is of type 4 or 10 and going from $S$ to $S'$ does not change the rightmost * or $a_1 = F(S')$ is of type 1, 4, 6, or 10 and going from $S$ to $S'$ replaces the rightmost * with a 0.
\end{lem}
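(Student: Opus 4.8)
The plan is to follow the pattern of Lemma \ref{lem:acyc1}: write down $b_0$ explicitly from rule $(3)$, enumerate the codimension-$1$ faces $a_1\neq a_0$ of $b_0$, and then identify the type of each resulting $S'$ via the case analysis used in the proof of Lemma \ref{l:wd}.

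First I would record what type $3$ means: $S(1)=k-1$, $S(0)\le n-k-1$, there is no $1$ to the right of the rightmost $*$, and there is a $0$ to the left of the leftmost $*$. Placing that leftmost $0$ inside $f_1$, I write $f_1=1\cdots1\,0\,g_1$ and note $f_{i+1}=0\cdots0$, so that rule $(3)$ yields
$$b_0=F(1\cdots1 * g_1 * f_2 * \cdots * f_i * f_{i+1}),$$
a face with $i+1$ $*$'s whose new leftmost $*$ lies in the former $f_1$, and with $b_0(1)=k-1$. The key structural point is that $F(b_0)$ is a simplex: since $b_0(1)=k-1$, a vertex of $F(b_0)$ is got by turning exactly one of the $i+1$ $*$'s into a $1$, so $F(b_0)\cong J(i+1,1)$. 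By Remark \ref{rem:matching} a type $3$ face has at least two $*$'s, hence $i\ge 2$ and $\dim F(b_0)=i\ge 2$. Therefore $a_1$, a codimension-$1$ face of $b_0$ distinct from $a_0$, has dimension $i-1\ge 1$ and so is not a vertex; this rules out obtaining $a_1$ from $b_0$ by turning a $*$ into a $1$, since that would leave $k$ ones and force every remaining $*$ to be $0$, producing a single vertex. Hence $a_1=F(S')$ comes from replacing exactly one $*$ of $b_0$ with a $0$, and since $a_1\neq a_0$ this $*$ is one of the $i$ original $*$'s of $S$; equivalently $S'$ is $S$ with its leftmost $0$ turned into a $*$ and one original $*$ turned into a $0$.

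Next I would split into the two cases the statement anticipates, according to whether the original $*$ that became a $0$ is the rightmost $*$ of $S$. In either case $S'(1)=k-1$ and $S'(0)=S(0)\le n-k-1$, and the new $*$ of $S'$ sits in the former $f_1$ with only $1$'s to its left, so there is no $0$ to the left of the leftmost $*$ of $S'$. If the affected $*$ is not the rightmost one, the rightmost $*$ is unchanged and still has no $1$ to its right, and reading off the rules as in Lemma \ref{l:wd} shows $S'$ is of type $10$ when $S'(0)=n-k-1$ and of type $4$ otherwise. If the affected $*$ is the rightmost $*$ of $S$, the new rightmost $*$ of $S'$ is the next $*$ of $S$ to its left, which in $S'$ is followed by the block $f_i$, a $0$, and $f_{i+1}=0\cdots0$; so if $f_i$ contains no $1$ we again obtain type $4$ or $10$, while if $f_i$ contains a $1$ there is a $1$ to the right of the rightmost $*$ of $S'$, and since there is no $0$ left of its leftmost $*$, rule $(5)$ is excluded and $S'$ satisfies $(1)$ (when $S'(1)\neq m_1$, or $S'(1)=m_1$ and $S'(0)\ge m_0$) or $(6)$ (when $S'(1)=m_1$ and $S'(0)<m_0$). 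Collecting the cases gives exactly the two alternatives in the statement.

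I expect the only genuinely delicate step to be the dimension argument excluding the possibility that $a_1$ arises by turning a $*$ of $b_0$ into a $1$; this is what confines $a_1$ to the listed types, and it is special to type $3$, where $S(1)=k-1$ is as large as possible for a non-vertex face, in contrast to the situation in Lemma \ref{lem:acyc1}. The remaining work — matching each $S'$ to one of the ten rules, with the usual split on $m_0$ and $m_1$ — is routine bookkeeping of the same flavor as in the proofs already given.
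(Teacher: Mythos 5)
Your proposal is correct and follows essentially the same route as the paper: write down $b_0$ from rule (3), observe that since $S(1)=k-1$ every codimension-1 face $a_1\neq a_0$ arises by setting one original $*$ to $0$, and then split on whether that $*$ is the rightmost one, classifying $S'$ via the rules. Your simplex/dimension argument is just a fleshed-out justification of the paper's terse remark that a $*$ can only be replaced by a $0$ because $S(1)=k-1$, and your finer identification of the sub-cases (type 4 vs.\ 10, type 1 vs.\ 6) is a harmless refinement of the paper's elimination argument.
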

\begin{proof}
In this case $f_1 = 1 \cdots 10f'_1$, $f_{i+1} = 0\cdots0$, and $S(1) = k-1$ and so 
\begin{center}
$a_0 = F(1 \cdots 10f'_1*\cdots f_i*0\cdots0)$ and $b_0 = F(1 \cdots 1*f'_1*\cdots f_i*0\cdots0)$.
\end{center}
In order to choose $a_1 = F(S')$ we can replace any * except the leftmost only with a 0 since $S(1) = k-1$. 
No matter what is replaced, $S(1) = S'(1) = k-1$ and so $a_1$ cannot be of type 2, 7, 8, or 9.

First consider the case when any * but the leftmost or rightmost is replaced by a 0, then 
\begin{center}
$a_1 = F(1 \cdots 1*f'_1\cdots f_i*0\cdots0)$.
\end{center}
In this case there is no 1 to the right of the rightmost~* in $S'$ and so $a_1$ cannot be of type 1, 5, or 6. Similarly there is no 0 to the left of the leftmost~* in $S'$ and so $a_1$ cannot be of type 3. Therefore $a_1$ is of type 4 or 10.
Also, going from $S$ to $S'$ does not change the rightmost~*.

Next consider the case when the rightmost * is replaced by a 0 and so
\begin{center}
$a_1 = F(1 \cdots 1*f'_1*\cdots *f_i00\cdots0)$.
\end{center}
There is no 0 to the left of the leftmost * in $S'$ and so $a_1$ is not of type 3 or 5. Therefore $a_1$ can only be of type 1, 4, 6, or 10. Either way going from $S$ to $S'$ replaces the rightmost * with a 0.
\end{proof}

\begin{lem}
\label{lem:acyc3}
Given a $V$-path $a_0,b_0,a_1$ where $a_0 = F(S) = F(f_1*\cdots f_i*f_{i+1})$ is of type 5, $a_1 = F(S')$ is of type 1 or 6.
\end{lem}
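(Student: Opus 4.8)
The plan is to follow the template of Lemmas~\ref{lem:acyc1} and~\ref{lem:acyc2}: record the sequence $S$ of the type-5 face $a_0$, apply rule~(5) to see what $b_0$ is, enumerate the possible $a_1 = F(S')$ as the codimension-1 faces of $b_0$ other than $a_0$, and then read off the type of each resulting $S'$ directly from the defining conditions of the ten rules.

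First I would unpack the hypothesis: $S(1)=m_1$, $S(0)\le m_0$, there is a~1 to the right of the rightmost~*, and there is a~0 to the left of the leftmost~*. The last condition forces the \emph{leftmost} 0 of $S$ to precede the leftmost~*, so $S = 1\cdots1\,0\,g$ where $1\cdots1$ is possibly empty and $g$ carries all the *'s. By rule~(5), $b_0 = F(1\cdots1\,*\,g)$; write $b_0 = F(S_0)$. The three features of $S_0$ that drive the argument are: $S_0(1)=m_1$; $S_0(0)=S(0)-1$; and the new leftmost~* of $S_0$ is preceded only by 1's (so no~0 lies to its left), while its rightmost~* is the rightmost~* of $S$, so a~1 still lies to the right of it. Since a type-5 face has at least two *'s (Remark~\ref{rem:matching}), $S_0$ has at least three, hence $b_0$ has dimension $\ge 2$; I will invoke this to discard degenerate outcomes.

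Then I would describe the candidates for $a_1=F(S')$: a codimension-1 face of $b_0$ distinct from $a_0$ arises by replacing a single~* of $S_0$ with a~0 or a~1, and not by turning the leftmost~* into a~0 (which returns $a_0$). The first observation is that, whichever~* is altered, $S'$ still has a~1 to the right of its rightmost~*: this is clear if the rightmost~* of $S_0$ is untouched, and if it is changed, the next~* to its left (which exists because $S_0$ has $\ge 3$ *'s) still has a~1 to its right. By the case analysis in the proof of Lemma~\ref{l:wd}, $a_1$ is therefore of type 1, 5, or 6, and it only remains to exclude type~5. If some~* was replaced by a~1, then $S'(1)=m_1+1\ne m_1$, which rules out types 5 and 6 and forces type~1 (here $S'(1)\le k-1$, since $S'(1)=k$ would collapse $F(S')$ to a single vertex, contradicting that it is codimension~1 in a cell of dimension $\ge 2$). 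If instead a non-leftmost~* was replaced by a~0, then $S'(1)=m_1$ and $S'(0)=S(0)\le m_0$, while the leftmost~* of $S'$ is still that of $S_0$ and is still preceded only by 1's, so there is no~0 to its left; this excludes type~5, and $S'$ then satisfies rule~(6) when $S(0)<m_0$ and rule~(1)(c) when $S(0)=m_0$, giving type~6 or type~1 respectively.

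The one delicate point is the bookkeeping around the leftmost~*: verifying that rule~(5) creates a leftmost~* with no~0 to its left, and that this property survives when a later~* is turned into a~0. That is precisely what forces $a_1$ to be type~6 rather than type~5 in the borderline case; everything else is a routine check of rule conditions, entirely parallel to the earlier acyclicity lemmas.
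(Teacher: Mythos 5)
Your proof is correct and follows essentially the same route as the paper's: write out $S$ and $b_0$ explicitly using the type-5 conditions, split into the cases of replacing a * by a 1 versus replacing a non-leftmost * by a 0, observe that a 1 always remains to the right of the rightmost * (forcing type 1, 5, or 6) and that no 0 lies left of the leftmost * (excluding type 5), and use the count $S'(1)=m_1+1$ to force type 1 in the first case. Your extra bookkeeping (pinning down exactly when type 6 versus (1)(c) occurs, and noting $S_0$ has at least three *'s) is a harmless refinement of the paper's argument.
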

\begin{proof} 
In this case $f_1 = 1\cdots 10f'_1$ and $f_{i+1} = f'_{i+1}10\cdots 0$ and so we have
\begin{center}
$a_0 = F(1\cdots 10f'_1*\cdots*f'_{i+1}10\cdots 0)$ and $b_0 = F(1\cdots 1*f'_1*\cdots*f'_{i+1}10\cdots 0)$.
\end{center}
There are again two cases for choosing $a_1$, but either way $S'(0) \leq S(0) \leq m_0 \leq n-k-1$ and so $a_1$ cannot be of type 9. 
For the first case, replace any * except the leftmost by a 0. Since there is a 1 to the right of the rightmost *, $a_1$ cannot be of type 2, 3, 4, 7, 8, or 10. Also since there is not a 0 to the left of the leftmost *, $a_1$ cannot be of type 5 and so $a_1$ can be of type 1 or 6.

On the other hand if we replace any * by a 1 we again have that there is a 1 to the right of the rightmost * and so $a_1$ cannot be of type 2, 3, 4, 7, 8, or 10. However this time $S'(1) = S(1)+1 = m_1+1$ and so $a_1$ cannot be of type 5 or 6 either. Therefore $a_1$ must be of type 1.
\end{proof}

\begin{lem}
\label{lem:acyc4}
Given a $V$-path $a_0,b_0,a_1$ where $a_0 = F(S) = F(f_1*\cdots f_i*f_{i+1})$ is of type 7, one of three things can happen:
\begin{enumerate}
\item [\rm{(i)}] $a_1 = F(S')$ is of type 1, 2, or 8 and going from $S$ to $S'$ replaces the rightmost * with a 0; 
\item [\rm{(ii)}] $a_1 = F(S')$ is of type 2, 3, 4, 8, or 10 and going from $S$ to $S$ leaves the rightmost * unchanged; or
\item [\rm{(iii)}] $a_1 = F(S')$ is of type 6 and going from $S$ to $S'$ replaces the rightmost * with a 1.
\end{enumerate}

\end{lem}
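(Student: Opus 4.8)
The plan is to mirror the structure of the preceding lemmas (\ref{lem:acyc1}--\ref{lem:acyc3}): unfold what it means for $a_0 = F(S)$ to be of type 7, write $S$ explicitly in the $f_i$-and-$*$ notation with the leftmost and rightmost $*$'s made visible, write down $b_0$, then enumerate the admissible choices of $a_1$ and for each one determine which types remain possible. Concretely, $a_0$ of type 7 means $S(1) = m_1 - 1$, $S(0) \leq m_0$, there is no $1$ to the right of the rightmost $*$, and there is a $0$ to the left of the leftmost $*$; so I would write $f_1 = 1\cdots 10f'_1$ and $f_{i+1} = 0\cdots 0$, giving
\begin{center}
$a_0 = F(1\cdots 10f'_1*\cdots f_i*0\cdots 0)$ and $b_0 = F(1\cdots 1 0 f'_1 * \cdots f_i * * 0 \cdots 0)$,
\end{center}
where $b_0$ is obtained by rule (7) replacing the leftmost $0$ (the one shown just before $f'_1$) with a $*$, so $b_0$'s leftmost $*$ sits to the left of where $S$'s did.

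Next I would split into cases on how $a_1 = F(S')$ is obtained from $b_0$ by turning one $*$ of $b_0$ (not the one matched to $b_0$, i.e. not $a_0$'s configuration) into a $0$ or a $1$, subject to $a_1 \neq a_0$. First, since $S'(0) \geq S(0) - \text{(at most one)}$ and $S'(1) \in \{S(1), S(1)+1\} = \{m_1 - 1, m_1\}$, and $S(0) \leq m_0 \leq n-k-1$, the value $a_1$ can never be of type 9. The three outcomes correspond to: (i) replacing the rightmost $*$ of $b_0$ (which is the "$*$ just introduced" slot, adjacent to the trailing $0\cdots 0$) by a $0$ — then $S'$ still has no $1$ right of its rightmost $*$ and $S'(1) = m_1 - 1$, while $S'(0) < m_0$ or $=m_0$; checking the conditions rules out types 3, 4, 5, 6, 7, 10 and leaves type 1, 2, or 8, and the effect is to replace the rightmost $*$ of $S$ with a $0$. (iii) Replacing that same rightmost $*$ by a $1$ — now $S'(1) = m_1$ and there is a $1$ to the right of the rightmost remaining $*$, which forces type 6 (since $S'(0) \leq m_0$ and there is still no $0$ left of the leftmost $*$... actually there is a $0$, so one must be careful; the leftmost $*$ of $b_0$ was to the left of $f'_1$'s $0$, so whether a $0$ sits left of the leftmost $*$ depends on $f'_1$ — I expect it forces type 6 exactly as stated because type 5 needs a $0$ left of the leftmost $*$, which the type-7 hypothesis on $S$ plus the structure of $b_0$ precludes). (ii) Replacing any other $*$ of $b_0$ by $0$ or $1$: then the rightmost $*$ of $S$ is unchanged, there is no $1$ to its right, so $a_1$ is of type 2, 3, 4, 8, or 10.

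The bookkeeping obstacle — and the main place I expect to have to be careful — is case (iii) and the precise tracking of whether a $0$ lies to the left of the leftmost $*$ after the move, because rule (7) itself shifts the leftmost $*$ leftward, and the later rules (5) vs. (6) and (3) vs. (4) distinguish exactly on this "$0$ left of leftmost $*$" condition. I would handle this by always displaying the leftmost $*$ explicitly with the block $1\cdots 1 * f'_1 \cdots$ after applying rule (7), so that "a $0$ to the left of the leftmost $*$" translates into the visible condition "$f'_1$, once its own leading $0$'s are exposed, contains a $0$ before the next $*$", and then reading off the type from Lemma \ref{l:wd}'s case analysis. The remaining cases are then routine sign-chasing of $S(0), S(1)$ against $m_0, m_1$, entirely parallel to the proof of Lemma \ref{lem:acyc3}.
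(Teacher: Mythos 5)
Your overall strategy is the paper's: write $S$ with the leftmost $0$ and the trailing block of $0$'s displayed, form $b_0$ by rule (7), and split into cases according to which $*$ of $b_0$ is closed to obtain $a_1$. But there is a concrete error at exactly the point you flag as ``the main place I expect to have to be careful.'' Rule (7) replaces the \emph{leftmost} $0$ of $S$, which by the type-7 hypothesis has only $1$'s to its left; hence $b_0 = F(1\cdots 1 * f'_1 * \cdots f_i * 0\cdots 0)$, and the newly created $*$ is the \emph{leftmost} $*$ of $b_0$, with no $0$ anywhere to its left. Your displayed $b_0$ keeps that $0$ and inserts the new $*$ next to the trailing zeros, and your case (i) explicitly identifies the rightmost $*$ of $b_0$ as ``the $*$ just introduced'' --- both wrong, and contradicting your own verbal description of rule (7). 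With the new $*$ correctly placed at the far left, the worry you raise in case (iii) evaporates: after the rightmost $*$ of $b_0$ is replaced by a $1$, the leftmost $*$ of $S'$ is the new one, there is no $0$ to its left, so type 5 is excluded and only type 6 survives. With your placement the analysis genuinely breaks; in particular the move you describe in case (i), closing the ``just introduced'' $*$ with a $0$, would return $a_0$ and is forbidden in a $V$-path.

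There is a second gap in case (ii): from ``the rightmost $*$ is unchanged and there is no $1$ to its right'' you conclude that $a_1$ is of type 2, 3, 4, 8, or 10, but those conditions are equally consistent with type 7, and excluding type 7 here is a substantive part of the lemma (it is what keeps the case list in Lemma \ref{lem:acyc6} finite, by preventing chains of type-7 faces). The exclusion again rests on the location of the leftmost $*$: if a middle $*$ is closed with a $0$, the leftmost $*$ of $S'$ is still the new one with only $1$'s to its left, so the ``$0$ to the left of the leftmost $*$'' clause of rule (7) fails; if any non-rightmost $*$ is closed with a $1$, then $S'(1) = m_1 \neq m_1 - 1$. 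One must also record that closing the leftmost $*$ with a $0$ is disallowed because it yields $a_0$. None of this appears in the proposal, so as written it does not establish the lemma.
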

\begin{proof} 
In this case $f_1 = 1\cdots 10f'_1$ and $f_{i+1} = 0\cdots 0$ and so we have
\begin{center}
$a_0 = F(1\cdots 10f'_1*\cdots f_i*0\cdots 0)$ and $b_0 = F(1\cdots 1*f'_1*\cdots f_i*0\cdots 0)$.
\end{center}
This time choosing $a_1$ will be broken up into three cases, but in all of them $S'(0) \leq S(0) \leq m_0 \leq n-k-1$ and so $a_1$ cannot be of type 9. In the first case replace the rightmost * by 0 and so $S'(1) = S(1) = m_1-1 \leq k-2$. This means $a_1$ cannot be of type 3, 4, 5, 6, or 10. Also there is no 0 to the left of the leftmost * and so $a_1$ cannot be of type 7 either. Therefore $a_1$ can be of type 1, 2, or 8.

For the second case either replace any * but the leftmost or rightmost by a 0 or replace any * but the rightmost by a 1. Either way going from $S$ to $S'$ leaves the rightmost * unchanged. Next note that there is no 1 to the right of the rightmost * and hence $a_1$ cannot be of type 1, 5, or 6. Also if any * but the leftmost or rightmost is replaced by a 0 then there is no 0 to the left of the leftmost *, whereas if any * but the rightmost is replaced by a 1 then $S'(1) = m_1$, so in either case $a_1$ cannot be of type 7. Therefore $a_1$ can be of type 2, 3, 4, 8, or 10.

Last of all, we can choose $a_1$ by replacing the rightmost * by a 1.
In this case $S'(1) = S(1)+1 = m_1$ and $S'(0)=S(1)-1<m_0$ so $a_1$ cannot be of type 1, 7, or 8. 
Since there is a 1 to the right of the rightmost * in $S'$, $a_1$ cannot be of type 2, 3, 4, or 10.
Since there is not a 0 to the left of the leftmost * in $S'$, $a_1$ cannot be of type 5 either. 
Therefore $a_1$ can only be of type 6.
\end{proof}

\begin{lem}
\label{lem:acyc5}
Let $a_0,b_0,a_1,\ldots,b_r,a_{r+1}$ be a $V$-path such that $a_0 = F(S)$ is of type 1 and $a_j = F(S^{(j)})$ for each $j\geq0$. If, while going from $S$ to $S^{(j)}$ for any $j\geq1$, the rightmost 1 of $S$ is replaced with a 0, then $a_0\neq a_{r+1}$.
\end{lem}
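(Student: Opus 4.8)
The plan is to track a suitable numerical invariant along the $V$-path and show it is strictly monotone, so that no cell can be revisited. The key observation is that the rightmost $1$ of $S$ is, in a sense, a ``watershed'': once it has been converted to a $0$, it can never be restored, because none of the ten rules ever turns a $0$ into a $1$ except rules (4), (8), (9), (10), each of which either leaves $S(1)$ unchanged or operates on a $*$ rather than a $0$, and in fact the only rules that create a new $1$ do so by replacing a $*$ (not a $0$), while the number of $1$'s to the right of the former position stays pinned. So the first step is to make this precise: I would define, for each sequence $T$ in the path, the statistic $p(T)$ = the position (counting from the left) of the coordinate of $J(n,k)$ that was the rightmost $1$ of the original $S$, recording whether that coordinate currently holds a $0$, a $1$, or a $*$, and argue by induction that once it becomes a $0$ it stays a $0$ for the remainder of the path.

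The second step is to invoke the structural Lemmas already proved. By hypothesis there is a first index $j\ge 1$ at which, going from $S$ to $S^{(j)}$, the rightmost $1$ of $S$ becomes a $0$. By Lemmas \ref{lem:acyc1}--\ref{lem:acyc4} (applied repeatedly, one step at a time, starting from the fact that $a_0$ is of type 1), I would show that for every index $i$ with $0 \le i \le r$, the transition $S^{(i)} \to S^{(i+1)}$ either (a) replaces the rightmost $1$ of $S^{(i)}$ with a $*$ that becomes the rightmost $*$, or replaces the rightmost $*$ with a $0$ or $1$, or leaves the rightmost $*$ alone --- in all of which cases the coordinate at position $p$, once equal to $0$, is untouched because it lies strictly to the \emph{left} of the rightmost $*$ and is not itself the leftmost $0$ (it sits among a block of $0$'s with no $0$ to its left being relevant to any applicable rule). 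The point is that each of rules (1)--(10) only ever modifies coordinates at the ``rightmost $1$,'' the ``rightmost $*$,'' the ``leftmost $0$,'' or the ``leftmost $*$,'' and I must check that the coordinate at position $p$ is none of these once it has been set to $0$ and the path has moved on. This is the bookkeeping heart of the argument.

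Granting that, the third step is immediate: since $a_0 = F(S)$ has a $1$ at position $p$ (it is the rightmost $1$) whereas $a_{r+1} = F(S^{(r+1)})$ has a $0$ (or at worst a $*$, but never a $1$) at position $p$, the sequences $S$ and $S^{(r+1)}$ differ, so $a_0 \ne a_{r+1}$ and the $V$-path is not closed.

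The hard part will be the case analysis in the second step: one must verify, for each of the relevant rule types that can occur at $a_i$ (as constrained by Lemmas \ref{lem:acyc1}--\ref{lem:acyc4}), that the coordinate at the frozen position $p$ is never the ``leftmost $0$'' targeted by rules (3), (5), (7), and never the ``leftmost $*$'' or ``rightmost $*$'' targeted by rules (4), (6), (8), (10). The first of these is the subtle one --- I would handle it by noting that after the rightmost $1$ of the original $S$ is turned into a $0$, every subsequent $S^{(i)}$ still has its rightmost $*$ strictly to the right of position $p$ (by the ``rightmost $*$'' clauses of Lemmas \ref{lem:acyc1}--\ref{lem:acyc4}), hence position $p$ lies in the $0$-$1$ block between the leftmost $*$ region and the rightmost $*$; and any $0$ to the left of position $p$ (there is at least one, or else $p$ would be where the original $*$-block ended, contradicting that $a_0$ is of type 1 with a $1$ to the right of its rightmost $*$) shields position $p$ from ever being the leftmost $0$. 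I expect this shielding argument to be the crux, and everything else to be routine verification against the list of ten rules.
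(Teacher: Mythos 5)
Your overall strategy coincides with the paper's: track the position $p$ of the original rightmost $1$ of $S$, show that once it is converted to a $0$ it can never be converted back, and conclude $S \ne S^{(r+1)}$. However, the justification you give for the crucial step has the geometry backwards, and as written it would not go through. You claim that after position $p$ becomes a $0$, every subsequent $S^{(i)}$ ``still has its rightmost $*$ strictly to the right of position $p$,'' so that $p$ sits between the $*$'s and is shielded from being the leftmost $0$ by some $0$ further left. In fact the opposite holds: in $S$ every entry to the right of position $p$ is a $0$, and since the only rules converting a $0$ to a $*$ (namely (3), (5), (7); rule (9) never applies because no $a_i$ is a vertex) require the converted $0$ to lie to the left of the leftmost $*$, those trailing $0$'s are never touched. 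Hence at the moment position $p$ first becomes a $0$, every entry at or to the right of $p$ is a $0$ and all $*$'s lie strictly to the \emph{left} of $p$ --- and that is exactly why $p$ is safe thereafter: a $0$ sitting to the right of every $*$ can never satisfy the precondition of rules (3), (5), (7), so positions $\geq p$ stay frozen at $0$ for the rest of the path. Your ``shielding'' claim is moreover false as stated: there need not be any $0$ to the left of position $p$ (for example $S = 1{*}{*}10$ in $J(5,3)$ with $m_1 = 1$ is of type 1(a) and has no $0$ to the left of its rightmost $1$), and if the rightmost $*$ really did lie to the right of $p$, a $0$ at $p$ could in principle be converted. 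So the lemma is true and your invariant is the right one, but the ``bookkeeping heart'' of your argument needs to be replaced by the simpler observation above.
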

\begin{proof}
First notice that in order to get from $S^{(j)}$ to $S^{(j+1)}$ for any $j\geq0$ either a 0 or 1 in $S^{(j)}$ is changed to a * based on the ten rules for matching faces and then a * is replaced by a 0 or 1. 
Furthermore, every $a_i$ has the same dimension as $a_0$ by definition of a $V$-path and since $a_0$ is a face of type 1 and hence not a vertex, none of the $a_i$ are vertices. Therefore we will never utilize rule (9).

Recall that a face of type 1 looks like
\begin{center}
$a_0 = F(f_1*\cdots f_i*f'_{i+1}10\cdots 0)$.
\end{center}
If there are any 0's to the right of the rightmost 1 in $S$ then they remain unchanged while going from $S^{(j)}$ to $S^{(j+1)}$ for any $j\geq0$, since the only rules other than (9) that involve replacing a 0 with a * require that the 0 be to the left of the leftmost *. 

If the rightmost 1 of $S$ is replaced with a 0 while going from $S$ to $S^{(j)}$ for any $j\geq1$, then there are no *'s to the right of that 0 in $S^{(j)}$. Therefore that 0 will remain fixed while going from $S^{(j)}$ to $S^{(r+1)}$ because again, the only rules other than (9) that involve replacing a 0 with a * require that the 0 be to the left of the leftmost~* and so $a_0\neq a_{r+1}$.
\end{proof}

\begin{lem}
\label{lem:acyc6}
There are no nontrivial, closed $V$-paths $a_0,b_0,a_1,\ldots,b_r,a_{r+1}$ such that $a_0 = F(S)$ is of type 1.
\end{lem}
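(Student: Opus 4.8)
The plan is to track a single well-chosen coordinate position along the $V$-path and argue it becomes permanently frozen, which prevents the path from returning to its starting cell. Suppose for contradiction that $a_0, b_0, a_1, \ldots, b_r, a_{r+1}$ is a nontrivial closed $V$-path with $a_0 = F(S)$ of type 1, so that $a_0 = F(f_1 * \cdots f_i * f'_{i+1} 1 0 \cdots 0)$, and write $a_j = F(S^{(j)})$. As noted in the proof of Lemma \ref{lem:acyc5}, since $a_0$ is not a vertex, none of the $a_j$ are vertices and rule (9) is never used; also every step $S^{(j)} \to S^{(j+1)}$ first turns a $0$ or $1$ into a $*$ (via one of (1),(3),(5),(7)) and then turns a $*$ into a $0$ or $1$ (via one of (2),(4),(6),(8)). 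The rightmost position of $S$ that holds the distinguished $1$ (the one exhibited in the type-1 form) and everything to its right will be the object of study.

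First I would dispose of the easy case using Lemma \ref{lem:acyc5}: if at any step $j \geq 1$ the rightmost $1$ of $S$ is replaced by a $0$, then $a_0 \neq a_{r+1}$, contradicting closedness. So we may assume that at every stage this rightmost $1$ is either still a $1$ or has been turned into a $*$. By Lemma \ref{lem:acyc1} applied to the initial step $a_0, b_0, a_1$, the second alternative must occur: going from $S$ to $S'$ replaces the rightmost $1$ with a $*$, and \emph{this new $*$ is the rightmost $*$ in $S'$}, so $a_1$ is of type 2, 3, 4, 7, 8, or 10. The key structural fact I want to establish and then iterate is: once the distinguished $1$ has become the rightmost $*$, in all subsequent steps the rightmost $*$ stays rightmost-$*$, is never turned into a $1$, and the $0$'s to its right stay fixed — so the only way the path could ever return to $S$ is if that rightmost $*$ is eventually turned back into a $1$ by rule (2), (6), or (10), and I will argue that when that happens the resulting $a_j$ can only be of a type from which the path cannot continue back toward $a_0$ without having already diverged.

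Concretely, the main work is a case analysis driven by Lemmas \ref{lem:acyc2}, \ref{lem:acyc3}, and \ref{lem:acyc4} together with Lemma \ref{lem:acyc1}, tracking the invariant ``the rightmost $*$ occupies the position of the original distinguished $1$, and the suffix of $0$'s beyond it is unchanged.'' For a type-3 cell, Lemma \ref{lem:acyc2} says the next cell either leaves the rightmost $*$ alone (landing in type 4 or 10) or replaces it with a $0$ (landing in type 1, 4, 6, or 10); replacing with a $0$ is exactly the forbidden event from Lemma \ref{lem:acyc5} (now the rightmost $*$, i.e. the old distinguished $1$, has become a $0$ with nothing but $0$'s to its right, hence permanently frozen as a $0$, so $a_0 \neq a_{r+1}$). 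For type 7, Lemma \ref{lem:acyc4} gives three outcomes, two of which (replace rightmost $*$ by $0$, or replace by $1$) again freeze that position away from $*$, and the third leaves it unchanged. For type 5, Lemma \ref{lem:acyc3} forces type 1 or 6 and does not disturb the suffix. So in every branch, either the distinguished position gets frozen as a non-$*$ symbol and closedness fails immediately, or the rightmost $*$ is preserved and we continue. Since the path is finite and closed, and since returning to the type-1 cell $a_0$ requires the distinguished position to hold a $1$ with a $*$ immediately produced to its left at the matching step — which the invariant and the above dichotomy never permit without an intervening freeze — we reach a contradiction.

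The hard part will be making the bookkeeping airtight: precisely formulating the invariant on the suffix (rightmost $*$ plus trailing $0$'s) so that it is genuinely preserved by the four ``reverse'' rules (2),(4),(6),(8) and by the preceding type-3/5/7-to-something transitions, and checking that the only transitions that could violate it are exactly the ones already shown (via Lemma \ref{lem:acyc5} and its analogue for a frozen $1$) to break closedness. I would organize this as: (a) state the invariant; (b) verify $a_1$ satisfies it using Lemma \ref{lem:acyc1}; (c) show each of Lemmas \ref{lem:acyc2}, \ref{lem:acyc3}, \ref{lem:acyc4} and the type-1, -2, -4 transitions either propagate the invariant or produce a permanently frozen symbol in the distinguished position; (d) conclude that an invariant-preserving closed path can never re-enter type 1 at $a_0$, so no such closed path exists.
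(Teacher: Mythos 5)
Your proposal is correct and follows essentially the same route as the paper: both track the position of the rightmost 1 of $S$ through Lemmas \ref{lem:acyc1}--\ref{lem:acyc4}, note that down-matched types (2, 4, 6, 8, 10) terminate the path, and conclude via Lemma \ref{lem:acyc5} once that position becomes a 0. The paper simply unrolls your invariant into an explicit enumeration of the five possible type-sequences ($1\to1$, $1\to3\to1$, $1\to5\to1$, $1\to7\to1$, $1\to7\to3\to1$) and their concatenations, checking each in turn.
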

\begin{proof}
Suppose there is a nontrivial, closed $V$-path $a_0,b_0,a_1,\ldots,b_r,a_{r+1}$ with $a_0$ being of type 1. Then $a_{r+1} = a_0$ and hence is also of type 1. Note that if for any $j$, $a_j$ is of type 2, 4, 6, 8, or 10 then the $V$-path is not closed since faces of those types are paired with faces of lower dimension.

By Lemmas \ref{lem:acyc1}--\ref{lem:acyc4}, this $V$-path must look like one of the following: 
\begin{enumerate}
\item [(i)] $a_0,b_0,a_1$ where $a_1$ is of type 1,
\item [(ii)] $a_0,b_0,a_1,b_1,a_2$ where $a_1$ is of type 3 and $a_2$ is of type 1,
\item [(iii)] $a_0,b_0,a_1,b_1,a_2$ where $a_1$ is of type 5 and $a_2$ is of type 1,
\item [(iv)] $a_0,b_0,a_1,b_1,a_2$ where $a_1$ is of type 7 and $a_2$ is of type 1,
\item [(v)] $a_0,b_0,a_1,b_1,a_2,b_2,a_3$ where $a_1$ is of type 7, $a_2$ is of type 3, and $a_3$ is of type 1,
\end{enumerate}
or a concatenation of several of these in the sense of Definition \ref{def:concat}. 

If we let $F(S')$ be the last face in each $V$-path above, then 
we finish this proof by showing case by case that the rightmost 1 in $S$ is replaced by a 0 while going from $S$ to $S'$.
This implies $a_0\neq a_{r+1}$ by Lemma \ref{lem:acyc5}, contradicting the assumption that the $V$-path is closed.

If our $V$-path starts as in (i) or (iii), then Lemma \ref{lem:acyc1} shows that the rightmost 1 in $S$ is replaced by a 0 while going from $a_0$ to $a_1$. 

If our $V$-path starts as in (ii), then Lemma \ref{lem:acyc1} shows that the rightmost 1 in $S$ is replaced by a * while going from $a_0$ to $a_1$ and Lemma \ref{lem:acyc2} shows that this * is then replaced by a 0 while going from $a_1$ to $a_2$. 

If our $V$-path starts as in (iv), then Lemma \ref{lem:acyc1} shows that the rightmost 1 in $S$ is replaced by a * while going from $a_0$ to $a_1$ and Lemma \ref{lem:acyc4} shows that this * is then replaced by a 0 while going from $a_1$ to $a_2$. 

If our $V$-path starts as in (v), then Lemma \ref{lem:acyc1} shows that the rightmost 1 in $S$ is replaced by a * while going from $a_0$ to $a_1$; Lemma \ref{lem:acyc4} shows that this * is unchanged while going from $a_1$ to $a_2$; and finally Lemma \ref{lem:acyc2} shows that this * is then replaced by a 0 while going from $a_2$ to $a_3$.
\end{proof}

\begin{lem}
\label{lem:acyc7}
There are no nontrivial, closed $V$-paths $a_0,b_0,a_1,\ldots,b_r,a_{r+1}$ such that $a_0 = F(S)$ is of type 3, 5, or 7.
\end{lem}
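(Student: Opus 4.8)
The plan is to mimic the structure of the proof of Lemma \ref{lem:acyc6}, treating the three cases $a_0$ of type 3, type 5, and type 7 separately but in the same spirit: analyze the possible "shapes" of a closed $V$-path starting at $a_0$ using the step-by-step transition Lemmas \ref{lem:acyc1}--\ref{lem:acyc4}, and then exhibit a coordinate of $S$ that is frozen by all subsequent transitions so that $a_{r+1}$ cannot equal $a_0$. First I would record the observation (already implicit in the proof of Lemma \ref{lem:acyc6}) that a closed $V$-path can never pass through a face of type $2,4,6,8,$ or $10$, since those are matched downward; hence every $a_j$ with $j$ between $1$ and $r$ is of type $1,3,5,7$, or $9$, and since $a_0$ is not a vertex, type $9$ never occurs. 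So from any intermediate vertex of the path the only outgoing transitions are those described in Lemmas \ref{lem:acyc1}--\ref{lem:acyc4}.

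For the type 5 case I would argue it reduces immediately to Lemma \ref{lem:acyc6}: by Lemma \ref{lem:acyc3}, the first step of a $V$-path with $a_0$ of type 5 lands on $a_1$ of type 1 or 6; type 6 kills closedness, so $a_1$ is of type 1, and then the concatenation $a_0,b_0,a_1,\dots,a_{r+1}$ contains a sub-$V$-path $a_1,\dots,a_{r+1}$ starting at a type-1 face with $a_1 = a_{r+1}$ (forced by $a_0 = a_{r+1}$ and the fact that $a_0,b_0,a_1$ is uniquely reversible only in trivial ways) — more cleanly, a closed $V$-path through a type-5 face contains a closed $V$-path through a type-1 face, contradicting Lemma \ref{lem:acyc6}. (I would phrase the reduction as: rotate the closed path to start at the type-1 vertex $a_1$.) The type 3 and type 7 cases need the "frozen coordinate" idea with a different coordinate: for type 3, $S$ has the form $1\cdots 10 f'_1 * \cdots * 0\cdots 0$ with $S(1)=k-1$, and the candidate frozen feature is the leftmost 0 (the one right after the initial block of 1's). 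By Lemma \ref{lem:acyc2}, the first transition either leaves the rightmost $*$ alone (staying type 4 or 10 — dead for closedness — or, after the leftmost $*$ analysis, effectively keeping the leftmost 0 in place) or replaces the rightmost $*$ with a 0; in every branch that keeps the path alive, the leftmost 0 of $S$ either stays a 0 with no $*$ to its left, or becomes/stays the boundary marker, and I would show by the same "a 0 can only be turned into a $*$ if it lies left of the leftmost $*$" principle used in Lemma \ref{lem:acyc5} that once the path has taken its first live step this leftmost 0 is frozen, so $a_{r+1}\neq a_0$.

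The main obstacle I anticipate is the type 7 case and, to a lesser extent, getting the bookkeeping right for which coordinate is frozen in the type 3 case — because a type-7 face $1\cdots 10 f'_1 * \cdots * 0\cdots 0$ with $S(1)=m_1-1$ has transitions (Lemma \ref{lem:acyc4}) into types $1,2,3,4,6,8,10$, so after one live step the path may be sitting at a type-1 or type-3 face, at which point one wants to invoke Lemmas \ref{lem:acyc6} and the type-3 case just proved; the subtlety is that the reduction must track enough data (e.g. that going from the original $S$ to the type-1 or type-3 face we passed through has already replaced the rightmost $*$ by a 1 or 0, or left it fixed) to conclude that the relevant frozen coordinate — the rightmost 1 for the type-1 subpath, or the leftmost 0 for the type-3 subpath — is genuinely present and immovable in $a_{r+1}$. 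Concretely, I would organize the type 7 argument as: a live first step sends $a_0$ (type 7) to $a_1$ of type $1,3$, or $7$; if type 7 we are looping among type-7 faces while strictly changing some coordinate (I would check a monotone quantity, e.g. $S(0)$ is non-increasing along type-7 steps and a strict decrease must occur), and otherwise we reach a type-1 or type-3 face and close the argument by the previous lemmas. Assembling these pieces — plus the straightforward concatenation remark from Definition \ref{def:concat} to handle $V$-paths that revisit $a_0$'s type only after several loops — completes the proof.
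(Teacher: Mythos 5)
Your reduction for the type~5 case is exactly the paper's argument, and it is in fact the \emph{whole} argument: the paper handles all three types by the same rotation trick. After discarding the downward-matched types $2,4,6,8,10$, Lemma~\ref{lem:acyc2} forces $a_1$ to be of type~1 when $a_0$ is of type~3 (the only live entry in its list of possible types for $a_1$ is type~1), Lemma~\ref{lem:acyc3} forces type~1 when $a_0$ is of type~5, and Lemma~\ref{lem:acyc4} forces type~1 or type~3 when $a_0$ is of type~7; in every case one rotates the closed path to start at the type~1 (or, in the type~7 case, possibly first at the type~3) face and invokes Lemma~\ref{lem:acyc6}. You see this mechanism clearly for type~5 but then abandon it for types~3 and~7, where you propose substitutes that do not hold up.

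Concretely: for type~3 your ``frozen leftmost~0'' is destroyed in the very first step --- $b_0$ is obtained from $S=1\cdots 10f'_1*\cdots f_i*0\cdots 0$ precisely by turning that leftmost~0 into a~$*$, and $a_1$ then fills in some \emph{other} $*$ with a~0, so the position in question holds a~$*$ in $S^{(1)}$, not a frozen~0. The principle from Lemma~\ref{lem:acyc5} (``a~0 can only become a~$*$ if it lies left of the leftmost~$*$'') freezes 0's that persist, but says nothing about whether a~$*$ at that position can later revert to a~0, so the argument as sketched does not close; and it is unnecessary, since Lemma~\ref{lem:acyc2} already tells you $a_1$ is of type~1 on any live branch. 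For type~7 you introduce a type-$7\to$type-$7$ transition that contradicts the list of possible targets you yourself quote from Lemma~\ref{lem:acyc4} (namely $1,2,3,4,6,8,10$ --- type~7 does not occur), and you handle this phantom case only with an unverified ``check a monotone quantity'' placeholder. Deleting that case leaves $a_1$ of type~1 or~3, and both reduce by rotation to Lemma~\ref{lem:acyc6} (directly, or via the type~3 case). So the correct skeleton is present, but two of the three cases rest on arguments that would fail or that address situations that cannot arise.
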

\begin{proof}
Suppose there is a nontrivial, closed $V$-path $a_0,b_0,a_1,\ldots,b_r,a_{r+1}$ with $a_0$ being of type 3. Again note that if for any $j$, $a_j$ is of type 2, 4, 6, or 8 then the $V$-path is not closed since faces of those types are paired with faces of lower dimension. 
By Lemma \ref{lem:acyc2} we must have that $a_1$ is of type 1 and hence $a_1,\ldots,b_r,a_0,b_0,a_1$ is a nontrivial, closed $V$-path with $a_1$ being of type 1. 
This contradicts Lemma \ref{lem:acyc6}. 
A similar argument deals with the cases when $a_0$ is of type 5 (using Lemma \ref{lem:acyc3}) and when $a_0$ is of type 7 (using Lemma \ref{lem:acyc4}).
\end{proof}

\begin{lem}
\label{lem:acyc8}
There are no nontrivial, closed $V$-paths $a_0,b_0,a_1,\ldots,b_r,a_{r+1}$ such that $a_0 = F(S)$ is a vertex.
\end{lem}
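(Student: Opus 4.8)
The plan is to attach to each vertex an integer invariant that strictly increases at every step of such a $V$-path; since the path is finite, it then cannot return to its starting cell.

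First I would describe what the steps look like. Since $a_0$ is a vertex, every $a_i$ is a vertex and every $b_i$ is an edge. For $0\le i\le r$ the pair $(a_i,b_i)$ lies in $V$, so $a_i$ is matched with the edge $b_i$. By Remark \ref{rem:matching} a face of type 1--8 or 10 has at least two $*$'s and so is not a vertex, while $v_0$ is matched with $\emptyset$; hence a vertex can be matched with an edge only via rule (9), and therefore $a_i\ne v_0$ for $0\le i\le r$, and $a_{r+1}=a_0$ is also not $v_0$. Writing $a_i=F(S)$, where $S$ is a $(0,1)$-string with $k$ ones, rule (9) gives $b_i=F(S')$ with $S'$ obtained from $S$ by replacing the leftmost $0$, say in position $p$, and the rightmost $1$, say in position $q$, by $*$'s; since $a_i\ne v_0$ we have $p<q$.

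The key step is to pin down $a_{i+1}$. The codimension $1$ faces of the edge $b_i=F(S')$ are exactly its two vertices, and by the definition of $F(-)$ these are the two $(0,1)$-strings that agree with $S'$ off its two $*$'s and have $k$ ones. Since $S'$ has $k-1$ ones, exactly one of the two $*$-positions must be filled with a $1$ and the other with a $0$: the filling with a $0$ in position $p$ and a $1$ in position $q$ gives back $a_i$, so the other one (a $1$ in position $p$ and a $0$ in position $q$) must be $a_{i+1}$, because $a_{i+1}\ne a_i$. Thus passing from $a_i$ to $a_{i+1}$ flips the bit in position $p$ from $0$ to $1$ and the bit in position $q$ from $1$ to $0$, with $p<q$.

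Finally I would let $N(F(T))$ be the integer whose base-$2$ expansion is the string $T$ read from the left (most significant digit) to the right. By the previous paragraph, the step $a_i\to a_{i+1}$ changes $N$ by $+2^{\,n-p}-2^{\,n-q}$, which is strictly positive since $p<q$. Hence $N(a_0)<N(a_1)<\cdots<N(a_{r+1})$, so $a_0\ne a_{r+1}$, contradicting the assumption that the $V$-path is closed; this proves the lemma. (Equivalently one can order vertices lexicographically by the increasing list of the positions of their $1$'s, which the same computation shows strictly decreases at each step.) The only point in this argument that is not a one-line computation is the identification of the two vertices of $b_i$ in the key step, and even that is immediate from Proposition \ref{prop:faces} — a face with two $*$'s is an edge, hence has two vertices — together with the constraint that the filled-in string have $k$ ones; so I do not expect any real obstacle beyond keeping the bookkeeping straight.
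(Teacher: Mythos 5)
Your proof is correct and follows essentially the same route as the paper: both arguments observe that each step of such a path is forced by rule (9) to replace the leftmost $0$ by a $1$ and the rightmost $1$ by a $0$, i.e.\ to move a $1$ strictly to the left, so the path cannot close. Your explicit monovariant $N$ just makes rigorous what the paper states informally as ``the rightmost 1 was moved to its left.''
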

\begin{proof}
If $a_0 = \{v_0\} = \{(1,\ldots,1,0,\ldots,0)\}$ then we are done since $\{v_0\}$ is not matched with an edge. If $a_0 = F(S)$ is any other vertex, then the rightmost 1 has at least one 0 to its left in $S$ and so
$$a_0 = F(1\cdots10f_110\cdots0), \ b_0 = F(1\cdots1*f_1*0\cdots0),$$
and
$$a_1 = F(1\cdots11f_100\cdots0) = F(S').$$
Notice the net result from $S$ to $S'$ was that the rightmost 1 was moved to its left.

Assume that $a_0,b_0,a_1,\ldots,b_r,a_{r+1}$ is a closed $V$-path such that $a_0$ is a vertex and that $a_i = F(S^{(i)})$ for $0\leq i\leq r+1$. For the same reasons as above none of the $a_i$ can be equal to $\{v_0\}$ and going from $S^{(i)}$ to $S^{(i+1)}$ always moves the rightmost 1 to its left and therefore $a_0 \neq a_{r+1}$, which is a contradiction.
\end{proof}

\begin{thm}
\label{thm:acyc}
The matchings described in Section \ref{s:Construction} are acyclic.
\end{thm}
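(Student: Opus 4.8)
The plan is to deduce the theorem from Forman's criterion, Theorem \ref{thm:Forman}(i): a discrete vector field $V$ is an acyclic matching of the Hasse diagram of $K$ if and only if there are no nontrivial closed $V$-paths. Since an arbitrary matching in our family is determined by a fixed choice of $m_0$ and $m_1$, it suffices to rule out nontrivial closed $V$-paths for the associated vector field $V$.

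First I would pin down the possible types of the initial cell $a_0$ of such a hypothetical path. By definition of a $V$-path, each pair $(a_i,b_i)$ lies in $V$ with $a_i$ a codimension $1$ face of $b_i$; in particular $a_0$ is the smaller cell of its matched pair. By Remark \ref{rem:matching}, the rules that send a face to one of \emph{higher} dimension are exactly (1), (3), (5), (7) — producing faces of type 1, 3, 5, 7 — together with (9), which matches a vertex with an edge; and $\{v_0\}$ is matched with $\emptyset$, hence is never the smaller cell of its pair. Therefore $a_0$ is either of type 1, 3, 5, or 7, or else $a_0$ is a vertex (a type-9 face, or $\{v_0\}$ itself, which admits no edge-partner at all).

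With this case division in hand, the theorem is an assembly of the lemmas already proved. If $a_0$ is of type 1, Lemma \ref{lem:acyc6} shows there is no nontrivial closed $V$-path beginning at $a_0$. If $a_0$ is of type 3, 5, or 7, Lemma \ref{lem:acyc7} does the same: its argument rotates the closed path so that it starts at the first type-1 cell, which Lemmas \ref{lem:acyc2}, \ref{lem:acyc3}, \ref{lem:acyc4} guarantee occurs immediately after $a_0$, and then invokes Lemma \ref{lem:acyc6}. If $a_0$ is a vertex, Lemma \ref{lem:acyc8} applies: either $a_0=\{v_0\}$, which is matched only to $\emptyset$, or every step of the path strictly moves the rightmost $1$ of the sequence to its left, so the path cannot return to $a_0$. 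These cases being exhaustive, there are no nontrivial closed $V$-paths, and Theorem \ref{thm:Forman}(i) yields that $V$ is acyclic, which is the assertion.

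The genuine work, of course, is already contained in Lemmas \ref{lem:acyc1}--\ref{lem:acyc8}, and this is where the main obstacle lies if one proves the statement from scratch: one needs a monovariant that is weakly monotone along every $V$-path and changes strictly somewhere on any closed one. The construction of Section \ref{s:Construction} is arranged so that the value and position of the rightmost $1$ of $S$ (and, for vertices, just its position) plays this role, but establishing this requires the careful bookkeeping — carried out in Lemmas \ref{lem:acyc1}--\ref{lem:acyc5} — of how each of the ten rules, and each admissible choice of $a_{i+1}$ below a given $b_i$, affects the tail of the sequence.
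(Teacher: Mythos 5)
Your proof is correct and follows essentially the same route as the paper: invoke Theorem \ref{thm:Forman}(i) and observe that Lemmas \ref{lem:acyc6}--\ref{lem:acyc8} exhaust all possible starting cells of a nontrivial closed $V$-path. Your explicit justification that $a_0$ must be of type 1, 3, 5, 7, or a vertex (because $a_0$ must be the smaller cell of its matched pair) is a welcome detail the paper leaves implicit, but it is not a different argument.
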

\begin{proof}
This follows from Theorem \ref{thm:Forman} (i) which says that there are no nontrivial, closed $V$-paths if and only if $V$ is an acyclic matching of the Hasse diagram of $K$. We have already shown there are no nontrivial closed $V$-paths in Lemmas \ref{lem:acyc6}--\ref{lem:acyc8} and so we are done.
\end{proof}

\section*{Acknowledgements}
This material is based upon work supported by the National Science Foundation
under Grant Number 0905768.
Any opinions, findings, and conclusions or recommendations expressed in this
material are those of the author and do not necessarily reflect the views of the
National Science Foundation.

\bibliographystyle{amsplain}
\bibliography{refs}		
\end{document}